\newtheorem{theorem}{Theorem}[section]
\newtheorem{lemma}[theorem]{Lemma}
\newtheorem{corollary}[theorem]{Corollary} 
\newtheorem{proposition}[theorem]{Proposition} 
\theoremstyle{definition}
\theoremstyle{remark}
\newtheorem{remark}[theorem]{Remark}
\numberwithin{equation}{section}
\newcommand*{\indbr}[1]{1_{\{#1\}}}
\DeclareMathOperator*{\EE}{\mathbb{E}} 
\newcommand*{\PP}{\mathbb{P}} 
\newcommand*{\RR}{\mathbb{R}} 
\newcommand*{\CC}{\mathbb{C}} 
\newcommand*{\NN}{\mathbb{N}} 
\newcommand*{\eps}{\varepsilon} 
\newcommand*{\opId}{I} 
\newcommand*{\opHm}[1][m]{H_{#1}} 
   \newcommand*{\opH}{\opHm[0]}
\newcommand*{\opBA}{B} 
\newcommand*{\Lp}[1][p]{L^{#1}([0,\infty))} 
\newcommand*{\Lpseg}[1][p]{L^{#1}([0,1])} 
\newcommand*{\LpCC}[1][p]{L^{#1}(\CC)} 
\newcommand*{\opnorm}[2]{\| #1 \|_{#2\to #2}}
\newcommand*{\Cpml}[1][p,m,\lambda]{C_{#1}} 
   \newcommand*{\Cp}[1][p]{C_{#1}}
\newcommand*{\Dpml}[1][p,m,\lambda]{D_{#1}}
\newcommand*{\cmpl}[1][p,m,\lambda]{c_{#1}}
\newcommand*{\gpm}[1][p,m]{g_{#1}}
   \newcommand*{\gp}[1][p]{\alpha_{#1}}
\newcommand*{\setpm}[1][p,m]{\Omega_{#1}}
\newcommand*{\apml}{\alpha_{0}} 
\newcommand*{\bpml}{\beta_{0}} 
\begin{document}

\title[Beurling-Ahlfors transform on radial functions]{The $L^p$-norms of the Beurling-Ahlfors transform on radial functions}

\author{Micha{\l} Strzelecki}
\address{Institute of Mathematics, University of Warsaw, Banacha 2, 02--097 Warsaw, Poland.}
\email{m.strzelecki@mimuw.edu.pl}

\subjclass[2010]{Primary: 60G46. Secondary: 42B20.}

\date{November 24, 2015}

\keywords{Beurling-Ahlfors transform, Hardy operator,  $L^p$-norms, martingales, radial functions.}

\begin{abstract}
We calculate the norms of the operators connected to the action of the  Beurling-Ahlfors transform on radial function subspaces introduced by Ba\~nuelos and Janakiraman. In particular, we find the norm of the Beurling-Ahlfors transform acting on radial functions  for $p>2$, extending the results obtained by  Ba\~nuelos and Janakiraman, Ba\~nuelos and Os\c{e}kowski, and Volberg for $1<p\leq 2$.
\end{abstract}

\maketitle

\section{Introduction and main results}

The Beurling-Ahlfors transform is a singular operator defined by
\begin{equation*}
\opBA f(z) = -\frac{1}{\pi}\operatorname{p.v.} \int_{\CC} \frac{f(w)}{(z-w)^2} dw,
\end{equation*}
where the integration is with respect to the Lebesgue measure on the complex plane~$\CC$. It plays an important role in the study of quasiconformal mappings and partial differential equations (see e.g. \cite{MR1294669, MR719167}).

A longstanding conjecture of Iwaniec \cite{MR719167} states that for $1<p<\infty$,
\begin{equation*}
\opnorm{\opBA}{\LpCC} = p^*-1,
\end{equation*}
where $p^* = \max\{p, \frac{p}{p-1}\}$. While the lower bound $\opnorm{\opBA}{\LpCC} \geq p^*-1$ was already known to Lehto~\cite{MR0181748}, the question about the opposite estimate remains open. Most results rely on the ideas of Burkholder and the Bellman function technique~\cite{MR1370109,MR2068982,MR2001941,MR2164413,MR2386238,MR3047469}, with the current best being $\opnorm{\opBA}{\LpCC} \leq 1.575 (p^*-1)$  due to Ba\~nuelos and Janakiraman~\cite{MR2386238} (see also~\cite{MR3047469} for an asymptotically better estimate as $p\to\infty$).

However, some sharp results are known for the Beurling-Ahlfors transform restricted to the class of radial functions~\cite{MR2595549,MR2677626,MR3018958,MR3119338,volberg,MR3189475}. In this case we have the representation (see~\cite{MR2595549})
\begin{equation*}
\opBA F(z) = \frac{\bar{z}}{z}\big(\opHm[0] f(|z|^2) - f(|z|^2)\big),
\end{equation*}
where $f:[0,\infty)\to\CC$ is an integrable function, $F(z) = f(|z|^2)$ is the associated radial function, and $\opH$ is the Hardy operator defined by the formula
\begin{equation*}
\opHm[0] f(t) = \frac{1}{t}\int_0^t f(s) ds.
\end{equation*}
Ba\~nuelos and Janakiraman \cite[Theorem~4.1]{MR2595549}  (and later, using other techniques, Ba\~nuelos and   Os\c{e}kowski~\cite[Theorem~5.1]{MR3018958},  Volberg~\cite{volberg}) proved that for $1<p\leq 2$ and any radial function $F\in L^p(\CC)$, we have $\|\opBA F\|_p \leq \frac{1}{p-1} \|F\|_p$. The constant $1/(p-1)$ is the best possible and coincides with the constant from Iwaniec's conjecture. As for $p>2$, Ba\~nuelos and   Os\c{e}kowski~\cite{MR3018958} observed that $\|\opBA F\|_p \leq \frac{2p}{p-1} \|F\|_p$. This bound is asymptotically sharp (and does not agree with the behavior conjectured in the case of all, not only radial, functions).

In their paper, Ba\~{n}uelos and Janakiraman~\cite{MR2595549} went a step further and considered for $m\in\NN$ the operators
\begin{equation*}
(\opId - (1+m)\opHm) f(t) =f(t) -  \frac{1+m}{t^{1+m/2}} \int_0^t f(s) s^{m/2} ds,
\quad f\in L^1_{\text{loc}}([0,\infty)),
\end{equation*}
which correspond to the action of the Beurling-Ahlfors transform on the radial function subspaces
\begin{equation*}
\{ F\in\LpCC : F(re^{i\theta}) = f(r)e^{-im\theta}\}.
\end{equation*}
They proved \cite[Section 5]{MR2595549} that
\begin{equation*}
\opnorm{\opHm}{\Lp} = \frac{1}{m/2 + (p-1)/p}, \quad 1<p<\infty,
\end{equation*}
(with the extremal family $f_{\eps}(t) = t^{-1/p+\eps}\indbr{t\in(0,1)}$), 
   and conjectured  \cite[Conjecture~1]{MR2595549} that the $L^p$-norm of the operator $\opId - (1+m)\opHm$ is equal to  
\begin{equation*}
(1+m)\opnorm{\opHm}{\Lp}  - 1 = \frac{m/2 + 1/p}{m/2 + (p-1)/p}
\end{equation*}
for $1<p<2$. For $p>2$, this number is smaller than one, and cannot be a candidate for the norm of $\opId - (1+m)\opHm$, since the operator $(1+m)\opHm:\Lp\to\Lp$ is not invertible (see Remark~\ref{rem:inv}). 

In fact, the formula
\begin{equation*}
\opHm f(t) = \frac{1}{t^{1+m/2}} \int_0^t f(s) s^{m/2} ds
\end{equation*}
defines a bounded operator on the space $\Lp$ ($1<p<\infty$) not only for natural $m$, but for all $m>-2(p-1)/p$ (see Propositon~\ref{prop:boundness}).
The main goal of this article is to find the $L^p$-norm of the operator $\opId - \lambda\opHm$ for $1<p<\infty$, $m>-2(p-1)/p$, and $\lambda\in\RR$. The case $\lambda=1+m$, $m\in\NN$, corresponds to the action of the Beurling-Ahlfors transform on radial function subspaces considered by Ba\~nuelos and Janakiraman, but it turns out that~\cite[Conjecture~1]{MR2595549} does not hold.

For the formulation of the main result we denote $\gpm = m/2 + (p-1)/p$ for $m>-2(p-1)/p$ and $1<p<\infty$.

\begin{theorem}\label{thm:main}
If $1<p<\infty$, $m>-2(p-1)/p$, and $\lambda\in\RR$, then 
\begin{equation}\label{ineq:main}
\|f-\lambda \opHm f\|_p \leq \Cpml\|f\|_p, \quad f\in\Lp,
\end{equation}
where
\begin{equation*}
\Cpml ^p = \sup\Big\{ \frac{(\beta-\gpm)|\alpha-\lambda|^p + (\gpm-\alpha)|\beta-\lambda|^p}{(\beta-\gpm)|\alpha|^p + (\gpm-\alpha)|\beta|^p} : \ \alpha < \gpm < \beta\Big\}.
\end{equation*}
The inequality is sharp. Moreover, the constant $\Cpml[p,m=0,\lambda=1]^p$ is equal to
\begin{equation}\label{eq:Cp-intr}
\Cp ^p := \sup_{\alpha\leq (p-1)/p} \frac{|\alpha-1|^p}{p(1-\alpha)-1 + |\alpha|^p} = 
  \begin{cases}
  \frac{1}{(p-1)^p} & \text{if } 1<p\leq 2,\\
  \frac{(1+|\alpha_p|)^{p-2}}{p-1} & \text{if } p> 2,
  \end{cases}
\end{equation}
where, for $2<p<\infty$,  $\alpha_p\in\RR$ is the unique negative solution to the equation $(p-1)\alpha_p + 2-p = |\alpha_p|^{p-2}\alpha_p$.
\end{theorem}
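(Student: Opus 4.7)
The plan is to prove the integral inequality~\eqref{ineq:main} by a Bellman-function argument after reducing to a translation-invariant setting, to match it with sharpness via an explicit family of piecewise-power test functions, and then to derive the explicit formula in the case $m=0$, $\lambda=1$ by a one-variable optimization.

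For the sharpness (lower bound), given $\alpha<\gpm<\beta$ and $T>0$ I would consider the two-piece test function
\[
f(t) = A\,t^{\beta-1-m/2}\indbr{0<t\leq T} + B\,t^{\alpha-1-m/2}\indbr{t>T},
\]
with the constants linked by $B = A\alpha T^{\beta-\alpha}/\beta$. This coupling makes the primitive $\int_0^t s^{m/2}f(s)\,ds$ continuous at $T$, and $\opHm f$ collapses to a single power on each half-line: $(A/\beta)t^{\beta-1-m/2}$ on $(0,T]$ and $(B/\alpha)t^{\alpha-1-m/2}$ on $(T,\infty)$. The condition $\alpha<\gpm<\beta$ is exactly what makes both half-line $L^p$-integrals converge, and a routine computation (noting that $p(\beta-\gpm)$ and $p(\gpm-\alpha)$ are the relevant integration exponents) yields
\[
\frac{\|f-\lambda\opHm f\|_p^p}{\|f\|_p^p}
=\frac{(\beta-\gpm)|\alpha-\lambda|^p + (\gpm-\alpha)|\beta-\lambda|^p}{(\beta-\gpm)|\alpha|^p + (\gpm-\alpha)|\beta|^p}.
\]
Taking the supremum over admissible $(\alpha,\beta)$ (and handling the excluded value $\alpha=0$ by a limit) gives $\|\opId-\lambda\opHm\|_{\Lp\to\Lp}\geq\Cpml$.

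For the upper bound I would pass to logarithmic coordinates by $t=e^v$, $\phi(v)=e^{v/p}f(e^v)$. This is an isometry $\Lp\to L^p(\RR)$ under which $\opId-\lambda\opHm$ becomes $\phi\mapsto \phi-\lambda(K*\phi)$, with convolution kernel $K(w)=e^{-w\gpm}\indbr{w>0}$. Setting $\psi=K*\phi$ one has the first-order relation $\psi'=\phi-\gpm\psi$, so the target inequality becomes
\[
\int_{\RR}\bigl|\psi'+(\gpm-\lambda)\psi\bigr|^p\,dv
\leq \Cpml^p \int_{\RR}\bigl|\psi'+\gpm\psi\bigr|^p\,dv.
\]
The plan is to establish this via an explicit $U\colon\RR^2\to\RR$, homogeneous of degree~$p$, with (i)~$U(x,y)\geq|x-\lambda y|^p-\Cpml^p|x|^p$ pointwise, and (ii)~a concavity-type bound forcing $v\mapsto U(\phi(v),\psi(v))$ to be nonincreasing along solutions of $\psi'=\phi-\gpm\psi$; integrating this monotonicity with suitable decay at infinity yields the required integral inequality. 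The shape of~$U$ is dictated by the definition of~$\Cpml$: on the extremal rays $x=\alpha y$ and $x=\beta y$ (for the optimizing pair) the majorant must agree with $|x-\lambda y|^p-\Cpml^p|x|^p$, and between them one extends by linearity in the variable $x/y$. The main technical obstacle — the heart of the proof — is verifying the concavity inequality across the nonsmooth loci introduced by the absolute values; one expects the construction and the location of the optimal $(\alpha,\beta)$ to split according to whether $1<p\leq 2$ or $p>2$, in line with the change of convexity of $x\mapsto|x|^p$.

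For the explicit form of~$\Cp$ in the case $m=0$, $\lambda=1$, I reduce the two-parameter supremum to the slice $\beta=1$. At $\beta=\lambda=1$ both $|\beta-1|^p$ and its $\beta$-derivative vanish ($p>1$), so the critical-point condition for the ratio in~$\beta$ is satisfied there; comparing with the limiting values $\beta\to\gpm[p,0]^+$ and $\beta\to\infty$ (which give $(p-1)^{-p}$ and $1$, respectively) shows that the two-parameter supremum equals the supremum over $\alpha\leq(p-1)/p$ of the value at $\beta=1$ (for $p>2$ it is an interior maximum at $\beta=1$, and for $1<p\leq 2$ it agrees with the boundary limit $\beta\to\gpm[p,0]^+$ when $\alpha$ is pushed to $(p-1)/p$). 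Substituting $\beta=1$ and using $p(\gpm[p,0]-\alpha)=p(1-\alpha)-1$ produces the one-variable expression in~\eqref{eq:Cp-intr}. Differentiating in~$\alpha$ and simplifying yields the critical equation $(p-1)\alpha+2-p=|\alpha|^{p-2}\alpha$: for $1<p\leq 2$ the supremum is attained in the boundary limit $\alpha\to(p-1)/p$ and evaluates to $(p-1)^{-p}$, and for $p>2$ the equation has a unique negative root~$\gp$; substituting back into the ratio and using the critical equation to rewrite $|\gp|^p=(p-1)|\gp|^2+(p-2)|\gp|$ allows the denominator to be factored as $(p-1)(1+|\gp|)^2$, giving $(1+|\gp|)^{p-2}/(p-1)$.
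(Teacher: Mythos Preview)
Your overall strategy coincides with the paper's: test functions for sharpness, and a special function $U$ majorizing $V(x,y)=|x-\lambda y|^p-\Cpml^p|x|^p$ for the upper bound. Two points deserve correction or sharpening.

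\textbf{The role of $U$.} Your condition~(ii), that $v\mapsto U(\phi(v),\psi(v))$ be nonincreasing along $\psi'=\phi-\gpm\psi$, does not by itself control $\int_{\RR}U(\phi,\psi)\,dv$; a nonincreasing function with decay at both ends is identically zero, which is too rigid. What one actually needs is $\int U(\phi,\psi)\,dv\le 0$, and the paper obtains this cleanly (in the original variables, without the logarithmic change) by taking $U(x,y)=-D\,|y|^{p-2}y(x-\gpm y)$ for a suitable constant $D>0$; then an integration by parts (Lemma~\ref{lem:by-parts-pm}) gives $\int_0^1 U(f,\opHm f)\,dt\le 0$. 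In your coordinates this is simply the observation that $U(\phi,\psi)=-\tfrac{D}{p}\tfrac{d}{dv}|\psi|^p$ is an exact derivative. So condition~(ii) should be replaced by ``$U(\phi,\psi)$ is a total derivative with vanishing boundary values'', not a monotonicity statement.

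\textbf{The majorization.} You correctly flag $U\ge V$ as the heart of the matter but leave it open. The paper's argument (Proposition~\ref{prop:majorant-pm}) uses the specific linear-in-$x$ form of $U$ above and the following dichotomy: either the boundary value $|\lambda\gpm^{-1}-1|$ already works (case~\eqref{ineq:ban-jan}), or the supremum defining $\Cpml$ is attained at an interior point $(\apml,\bpml)\in\setpm$; in the latter case the first-order conditions force $x\mapsto U(x,1)$ to be the common tangent to $x\mapsto V(x,1)$ at $x=\apml$ and $x=\bpml$, and a concave--convex--concave lemma (Lemma~\ref{lem:3con-pm}) yields $V\le U$. Your sketch does not supply this mechanism; in particular the split is not simply $1<p\le2$ versus $p>2$.

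\textbf{Identifying $\Cp$.} Your observation that $\beta=1$ is always a critical point of the ratio (for $m=0$, $\lambda=1$) is correct and pleasant, but you have not excluded other critical points in $\beta$, so the reduction to $\beta=1$ is incomplete. The paper avoids this by an indirect argument: it proves $\|f-\opH f\|_p\le \Cp\|f\|_p$ directly with the explicit $U$ of Section~\ref{sec:majorant} (Proposition~\ref{prop:majorant}), and then equality $\Cp=\Cpml[p,0,1]$ follows from the obvious inequality $\Cp\le\Cpml[p,0,1]$ together with sharpness.
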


\begin{remark}\label{rem:Cmpl-values}
Even for $1<p<2$ (and $\lambda=1+m$), the norm of the operator $\opId - (1+m)\opHm$ is sometimes greater than the conjectured value $(1+m)\gpm^{-1} - 1$. E.g. for $p=3/2$, $m=1$, $\lambda=2$, we have $\Cpml^p\approx 1.81$ (attained in the neighbourhood of $(\alpha,\beta) = (0.4, 5.7)$), whereas $((1+m)\gpm^{-1}-1)^{p} = (7/5)^{3/2} \approx 1.66$.
\end{remark}

\begin{remark}\label{rem:Cmpl-values2}
Apart from the case $m=0$, $\lambda=1$, there are simple formulas for $\Cpml$ if $\lambda\leq 0$ or $p=2$ (see Section~\ref{sec:lambda-pm-proofs}). Moreover, a sufficient and necessary condition for   $\Cpml=|\lambda\gpm^{-1}-1|$ to hold can be formulated (see the proof of Proposition~\ref{prop:majorant-pm} and Section~\ref{sec:lambda-pm-proofs}). Note also that $\Cpml\geq\max\{|\lambda\gpm^{-1}-1|, 1\}$ (see Lemma~\ref{lem:Cpm}). 
\end{remark}

\begin{remark}
Throughout the paper we work with real-valued functions, but Theorem~\ref{thm:main} also holds (with the same constant) for complex-valued functions (see Lemma~\ref{lem:pelcz}).
\end{remark}

The results of Theorem~\ref{thm:main} are new already for $p>2$, $m=0$, and $\lambda=1$, and give immediately the following extension of results obtained by other authors {\cite{MR2595549,MR3018958,volberg}.

\begin{corollary}\label{cor:BA}
For $1<p<\infty$ and any complex-valued radial function $F\in \LpCC$, we have the sharp inequality $\|\opBA F\|_p \leq \Cp \|F\|_p$.
\end{corollary}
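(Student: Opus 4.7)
The plan is to derive the corollary directly from Theorem~\ref{thm:main} in the special case $m=0$, $\lambda=1$. The starting point is the representation
\begin{equation*}
\opBA F(z) = \frac{\bar{z}}{z}\bigl(\opH f(|z|^2) - f(|z|^2)\bigr)
\end{equation*}
for a radial function $F(z)=f(|z|^2)$ recorded in the introduction.

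First I would observe that $|\bar{z}/z|=1$, hence $|\opBA F(z)|=|(\opH f - f)(|z|^2)|$. Passing to polar coordinates and substituting $t=r^2$ converts both planar integrals $\|F\|_{\LpCC}^p$ and $\|\opBA F\|_{\LpCC}^p$ into the same multiple (namely $\pi$ times) the one-dimensional integrals $\|f\|_{\Lp}^p$ and $\|f-\opH f\|_{\Lp}^p$, respectively. Applying Theorem~\ref{thm:main} with $m=0$, $\lambda=1$ then immediately yields $\|\opBA F\|_{\LpCC}\le \Cp\|F\|_{\LpCC}$, with the explicit value of $\Cp$ supplied by formula~\eqref{eq:Cp-intr}.

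For sharpness I would take an almost-extremal sequence $(f_n)$ for the one-dimensional inequality---its existence is part of the sharpness assertion of Theorem~\ref{thm:main}---and lift it to the radial sequence $F_n(z)=f_n(|z|^2)$. The change of variables above shows that the planar ratio $\|\opBA F_n\|_p/\|F_n\|_p$ agrees with $\|f_n-\opH f_n\|_p/\|f_n\|_p$, so the sharpness transfers to $\CC$. The extension from real- to complex-valued $f$, needed because the corollary is stated for complex-valued $F$, is handled by the complex version of Theorem~\ref{thm:main} supplied by the Pe\l czy\'nski-type lemma mentioned in the remark right after the theorem.

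I do not expect a substantial obstacle: the whole mathematical content sits inside Theorem~\ref{thm:main}, and this corollary is essentially a translation of that theorem under the change of variables $t=|z|^2$. The only point requiring a line of care is that the Jacobian factor $\pi$ appears identically in the numerator and denominator of the relevant quotient, so it drops out and the sharp constant $\Cp$ is unchanged between the one-dimensional and the planar formulations.
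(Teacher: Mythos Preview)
Your proposal is correct and matches the paper's intended argument: the paper states that Theorem~\ref{thm:main} (in the case $m=0$, $\lambda=1$) ``gives immediately'' Corollary~\ref{cor:BA} and offers no further proof, so your explicit change-of-variables computation and transfer of sharpness are exactly what the reader is meant to supply.
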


The article is organized as follows. A complete and purely analytical proof of inequality~\eqref{ineq:main} is contained  in Section~\ref{sec:lambda-m}. Section~\ref{sec:mart} is designed to show a bigger picture. We  prove a maximal martingale inequality connected to the special case $m=0$ and $\lambda=1$. We also identify the constant $\Cpml[p,0,1]$ and try to explain the main ideas behind the construction of the special functions used in the proofs.

\section{Backstage: the martingale inequality}
  \label{sec:mart}

\subsection{Motivation and results.}

For a martingale $f=(f_n)_{n=0}^{\infty}$ denote its one-sided maximal function by  $f^*_n = \sup_{0\leq j \leq n} f_j$. We also use the notation $f^*_{\infty} = \sup_{0\leq n} f_n$ and $f_{\infty} = \lim_{n\to\infty} f_n$ (if the limit exist a.s.).

Recall that the $L^p$-norm, $1<p<\infty$, of the Hardy operator $\opH$ is equal to $p/(p-1)$. This number is also the best constant in Doob's inequality: for a martingale $(f_k)_{k=0}^n$  we have $\| f_n^*\|_p\leq \frac{p}{p-1} \|f_n\|_p$.
It turns out that the martingale inequality can be used to derive the estimate $\| \opH f\|_p\leq \frac{p}{p-1}\|f\|_p$ for nonnegative and nonincreasing functions \cite{MR577984}; a simple rearrangement argument gives then $\|\opH f\|_p\leq \frac{p}{p-1}\|f\|_p$ for all real-valued $f\in\Lp$.

We consider the following maximal inequality. 

\begin{theorem}\label{thm:mart}
For any martingale $(f_n)_{n=0}^{\infty}$, we have
\begin{equation}\label{ineq:mart}
\|f_n-f_n^*\|_p \leq \Cp \|f_n\|_p, \quad 1<p<\infty, n\geq 0,
\end{equation}
where $\Cp$ is defined in~\eqref{eq:Cp-intr}. The inequality is sharp.
\end{theorem}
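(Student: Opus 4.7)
My plan is to prove Theorem~\ref{thm:mart} by the Burkholder--Bellman function method on the domain $\Omega = \{(x,y)\in\RR^2 : y\geq x\}$, in which the pair $(f_n,f_n^*)$ lives. Setting $V(x,y) = (y-x)^p - \Cp^p |x|^p$ and noting that $|f_n-f_n^*| = f_n^*-f_n$, the inequality~\eqref{ineq:mart} is equivalent to $\EE V(f_n,f_n^*) \leq 0$. I would look for a function $U\colon\Omega\to\RR$ satisfying (i) $U \geq V$ on $\Omega$; (ii) $U(x,x) \leq 0$ for all $x\in\RR$; and (iii) the main inequality
\[
\EE\, U\bigl(x+d,\,y\vee(x+d)\bigr) \leq U(x,y)
\]
for every $(x,y)\in\Omega$ and every mean-zero integrable $d$. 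Once such a $U$ is available, applying (iii) conditionally on $\mathcal{F}_n$ to $d = f_{n+1}-f_n$ and iterating from the initial condition $f_0^* = f_0$ gives
\[
\EE V(f_n,f_n^*) \leq \EE U(f_n,f_n^*) \leq \dots \leq \EE U(f_0,f_0) \leq 0,
\]
which is the required bound.

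The shape of $U$ is dictated by the extremizers in the variational definition of $\Cpml[p,0,1]^p$. Specializing $\beta=1$ collapses the two-parameter supremum to the one-parameter problem in~\eqref{eq:Cp-intr}, whose optimum is attained at $\alpha = (p-1)/p$ for $1<p\leq 2$ and at $\alpha = \alpha_p$ for $p>2$. This suggests an ansatz of the form
\[
U(x,y) = a(y)\,|x|^p + b(y)\,x + c(y)
\]
on a "regular" region of $\Omega$, with coefficients pinned down by (a) equality $U=V$ along the extremal trajectory emanating from the diagonal, (b) the boundary condition $U(x,x)=0$, and (c), in the regime $p>2$, a $C^1$-match along a critical ray $\{x=\alpha_p y\}$ with a second piece that is affine in $x$ and covers the "singular" part of $\Omega$. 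The homogeneity $U(tx,ty) = t^p U(x,y)$ for $t>0$ cuts the verification down to a one-dimensional problem.

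Property (iii) is equivalent, by a standard approximation by discrete mean-zero jumps, to its two-point version
\[
p_1\, U(x+d_1,\, y) + p_2\, U\bigl(x+d_2,\, y\vee(x+d_2)\bigr) \leq U(x,y)
\]
for $d_1 < 0 < d_2$ with $p_1 d_1 + p_2 d_2 = 0$. This splits into the case $x+d_2 \leq y$ (running maximum preserved; ordinary concavity of $U(\cdot,y)$) and the case $x+d_2 > y$ (running maximum updated), and, in the regime $p>2$, further according to whether the chord crosses the interface $\{x=\alpha_p y\}$; each subcase reduces to an elementary one-variable inequality. Sharpness of $\Cp$ follows by constructing a sequence of martingales whose distributions asymptotically replicate the extremizing two-point structure $(\alpha,\beta)$ about the "center" $g_{p,0}$ appearing in the definition of $\Cpml[p,0,1]^p$, thereby driving the ratio $\EE|f_n-f_n^*|^p/\EE|f_n|^p$ arbitrarily close to $\Cp^p$.

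The main obstacle will be the regime $p>2$: the piecewise definition of $U$, mediated by the transcendental parameter $\alpha_p$, makes the verification of (iii) for chords crossing the interface $\{x=\alpha_p y\}$ the most delicate step. It is precisely the $C^1$-matching of the two pieces of $U$ at that interface that makes the two-point inequality hold across it, and choosing the coefficients $a,b,c$ so that this matching is compatible simultaneously with the boundary condition $U(x,x)=0$ and with equality $U=V$ on the extremal trajectory is the technical crux of the proof.
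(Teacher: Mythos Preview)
Your general framework---finding $U$ satisfying majorization, an initial condition, and the supermartingale property (iii), then iterating---is exactly the paper's Proposition~\ref{prop:method}, and your sharpness plan via martingales replicating the extremal two-point structure matches the construction in Section~\ref{sec:proof-mart}.

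Where you diverge is in the shape of $U$. You propose a piecewise function with an $|x|^p$ term on a ``regular'' region and an affine-in-$x$ piece on a ``singular'' region, glued $C^1$ along $\{x=\alpha_p y\}$, and you identify verifying (iii) for chords crossing this interface as the crux. The paper's key observation is that none of this is needed: a \emph{single} function
\[
U(x,y) \;=\; -\,\frac{|1-\alpha_p|^{p-2}}{p-1}\,|y|^{p-2}y\bigl(px-(p-1)y\bigr),
\]
affine in $x$ for each fixed $y$, works globally for every $1<p<\infty$, with no piecewise structure and no interface. With this choice, concavity in $x$ is trivial and the maximal condition $U(z,z)\leq U(z,y)$ for $z>y$ is a two-line computation, so your condition (iii) comes essentially for free. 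The entire difficulty is relocated to the \emph{static} majorization $V\leq U$ on all of $\RR^2$ (Proposition~\ref{prop:majorant}): for $p>2$ this reduces directly to the defining supremum for $\Cp^p$, while for $1<p<2$ it requires a separate one-variable analysis.

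So your plan is not wrong in principle---a more elaborate $U$ of your type may well exist---but you have inverted where the work lies. The paper trades a harder pointwise inequality (i) for a trivial dynamic condition (iii); your route would spread the difficulty across both and introduce an interface that the problem does not actually require. If you pursue your ansatz, note that the paper's affine $U$ already majorizes $V$ on all of $\RR^2$, not merely on $\{y\geq x\}$; this strongly suggests that your piecewise construction, once pinned down, would be dominated by the affine one.
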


The quantity $\|f_n-f_n^*\|_p$ seems natural to study, but the main motivation is the aforementioned link to the Hardy operator (see Section~\ref{sec:proof-mart} for the proof). Note that this approach is different from that of Ba\~nuelos and Os\c{e}kowski \cite{MR3018958}, who used estimates for pure-jump martingales, and the analytical approaches of Ba\~nuelos and Janakiraman \cite{MR2595549}, and Volberg \cite{volberg}.

\begin{corollary}\label{cor:main-monotone}
Let $1<p<\infty$. If $f\in\Lp$ is real-valued and nonincreasing, then
\begin{equation*}
\|f-\opH f\|_p \leq \Cp\|f\|_p.
\end{equation*}
\end{corollary}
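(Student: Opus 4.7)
The plan is to deduce Corollary~\ref{cor:main-monotone} from Theorem~\ref{thm:mart} by realising the pair $(f, \opH f)$ as the terminal value and one-sided maximal function of an appropriate martingale. This mirrors the classical deduction of the Hardy inequality $\|\opH f\|_p\le \tfrac{p}{p-1}\|f\|_p$ from Doob's maximal inequality, alluded to just before Theorem~\ref{thm:mart}.

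By a standard truncation I would first reduce to $f$ supported on a compact interval $[0,T]$ (and let $T\to\infty$ at the end). On $[0,T]$ with normalised Lebesgue measure, fix a decreasing sequence $T = t_0 > t_1 > t_2 > \cdots \to 0$ which is dense in $(0,T]$, and let $\mathcal{F}_n$ be the $\sigma$-algebra whose atoms are the single set $(0,t_n)$ and the points of $[t_n,T]$, i.e.\ $\mathcal{F}_n = \sigma(\{(0,t_n)\}) \vee \mathcal{B}([t_n,T])$. The filtration is increasing, and a direct computation of the conditional expectation shows that the martingale $g_n := \EE[f\mid \mathcal{F}_n]$ has the explicit form
\begin{equation*}
g_n(s) \,=\, f(s)\cdot\indbr{s\ge t_n} \,+\, \opH f(t_n)\cdot\indbr{s<t_n}.
\end{equation*}

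The heart of the argument is the identification of the one-sided maximum $g_n^*$. At a fixed point $s$, the sequence $k\mapsto g_k(s)$ takes the value $\opH f(t_k)$ as long as $t_k > s$ and then freezes at $f(s)$ once $t_k$ drops below $s$. Because $f$ is nonincreasing, both $\opH f$ is nonincreasing and $\opH f \ge f$ pointwise; therefore
\begin{equation*}
g_n^*(s) \,=\, \max\{\opH f(t_k): 0\le k\le n,\ t_k>s\},
\end{equation*}
which by density of $(t_k)$ and continuity of $\opH f$ increases monotonically to $\opH f(s)$ as $n\to\infty$.

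Theorem~\ref{thm:mart} applied to $(g_n)$ yields $\|g_n - g_n^*\|_p \le \Cp \|g_n\|_p$, and passing to the limit via dominated convergence, with majorants $|f|$ and $\opH|f|$ (both in $\Lp$ since $\opH$ is bounded on $\Lp$), gives $\|f-\opH f\|_p\le \Cp \|f\|_p$ on $[0,T]$; removing the truncation finishes the proof. The main obstacle I expect is choosing the filtration so that the discrete-time maximum $g_n^*$ truly reproduces $\opH f$ in the limit: a naive dyadic filtration fails because it only refines one of its atoms. The monotonicity assumption on $f$ is precisely what rescues the construction, since it forces $\opH f$ to be monotone and to dominate $f$, so that the supremum over the countable sequence of scales $(t_k)$ recovers $\opH f(s)$ pointwise rather than some larger Hardy--Littlewood-type maximal function that would ruin the sharp constant.
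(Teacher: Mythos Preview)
Your approach coincides with the paper's: realise $(f,\opH f)$ as the terminal value and one-sided maximum of the martingale obtained by conditioning $f$ on the filtration that reveals the right part of the interval completely while keeping the left part as a single atom, and then invoke Theorem~\ref{thm:mart}. The paper carries this out in continuous time on $[0,1]$, with $\mathcal{F}_t=\sigma\big([0,1-t],\mathcal{B}([1-t,1])\big)$ and $X_t=\EE[f\mid\mathcal{F}_t]$, so that $\sup_{0\le t\le 1}X_t(\omega)=\opH f(\omega)$ holds exactly once $f$ is nonincreasing.

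There is, however, a genuine gap in your discretisation. A strictly decreasing sequence $T=t_0>t_1>\cdots\to 0$ cannot be dense in $(0,T]$ --- there are no terms strictly between $t_0$ and $t_1$ --- and monotonicity of $(t_n)$ is forced by the requirement that $(\mathcal{F}_n)$ be a filtration, so you cannot simply drop it. With any fixed decreasing sequence the running maximum at a point $s$ is eventually $\max\big(f(s),\opH f(t_{k_0-1})\big)$, where $t_{k_0-1}$ is the smallest term of the sequence exceeding $s$; this is in general strictly smaller than $\opH f(s)$, so $g_n^*$ does not converge to $\opH f$. (Your displayed formula for $g_n^*$ is inaccurate for the same reason: when the sequence is sparse, $f(s)$ can exceed every $\opH f(t_k)$ with $t_k>s$.) The fix is either to pass to continuous time, as the paper does after first extending Theorem~\ref{thm:mart} to right-continuous martingales by approximation, or to keep discrete time but add a second limit over a family of partitions whose mesh tends to zero.
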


Quite unexpectedly, some difficulties arise at the stage of rearrangements. In our setting it is possible that
\begin{equation*}
\|g- \opH g\|_p < \| f- \opH f \|_p,
\end{equation*}
where $g$ denotes the nonincreasing rearrangement of a real-valued function $f\in\Lp$ (examples can be found with $f$ being a (positive) step function, in which case $\opH g$, $\|g- \opH g\|_p$, $\| f- \opH f \|_p$ can be explicitly calculated). Hence, it seems that Corollary~\ref{cor:main-monotone} does not directly imply Theorem~\ref{thm:main} (for $m=0$, $\lambda=1$).  Fortunately, it is possible to use the tools from the proof of the martingale inequality (and adapt them to work not only for $m=0$, but for $m>-2(p-1)/p$ and all $\lambda\in\RR$) to obtain our main result (see Sections~\ref{sec:proof-main} and~\ref{sec:lambda-m}).

\subsection{Method of the proof of Theorem~\ref{thm:mart} and a lower bound for the best constant.}
  \label{subsec:lower-bound}

We follow Burkholder's approach to the Doob inequality \cite[p. 578]{MR1108183}: in order to prove inequality~\eqref{ineq:mart}, it suffices to find an appropriate special function (for further reading about maximal martingale inequalities see also~\cite[Chapter~7]{MR2964297}).

\begin{proposition}\label{prop:method}
Let $V(x,y) = |x-y|^p - C^p|x|^p$ and suppose that $U:\RR^2\to\RR$ satisfies the following conditions.
\begin{enumerate}[label=\upshape\arabic*.]
	\item (Majorization) If  $x\leq y$, then $
		V(x,y) \leq U(x,y)$.
	\item (Initial condition) For all $x\in\RR$, we have $U(x,x) \leq 0$.
	\item (Maximal condition) If  $ x\leq y, h\in\RR$, then
		\begin{equation*}
		U(x+h, (x+h)\lor y)	\leq U(x+h,y).
		\end{equation*}
\item (Concavity) For all $y\in\RR$, the function $U(\cdot,y):\RR\to\RR$ is concave. \end{enumerate}
Then $\|f_n-f_n^*\|_p \leq C\|f_n\|_p$ for any martingale $(f_n)_{n= 0}^{\infty}$ and any $n\geq 0$.
\end{proposition}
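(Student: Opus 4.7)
The plan is to follow Burkholder's scheme: track the process $Z_n = U(f_n, f_n^*)$ and show that $\EE Z_n$ stays nonpositive, so that the majorization condition yields the desired $L^p$ bound. Throughout, I would write $d_n = f_n - f_{n-1}$ for the martingale differences and $(\mathcal{F}_n)$ for the underlying filtration; by a standard reduction (truncating the martingale or first proving the bound for simple martingales and passing to the limit via Fatou on the left and $L^p$-convergence on the right), it suffices to consider martingales taking finitely many values, which removes any integrability worries for $U(f_n, f_n^*)$.

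First I would verify the base case: $Z_0 = U(f_0, f_0) \leq 0$ by the initial condition, hence $\EE Z_0 \leq 0$. For the inductive step, fix $n\geq 1$ and note that $f_n^* = f_{n-1}^* \lor f_n = f_{n-1}^* \lor (f_{n-1} + d_n)$, while $f_{n-1} \leq f_{n-1}^*$. Applying the maximal condition with $x = f_{n-1}$, $y = f_{n-1}^*$, $h = d_n$ gives pointwise
\begin{equation*}
Z_n = U(f_{n-1}+d_n, (f_{n-1}+d_n)\lor f_{n-1}^*) \leq U(f_{n-1}+d_n, f_{n-1}^*).
\end{equation*}
Since $f_{n-1}^*$ is $\mathcal{F}_{n-1}$-measurable and $U(\cdot, y)$ is concave for every $y$, conditional Jensen's inequality combined with $\EE[d_n\mid\mathcal{F}_{n-1}] = 0$ yields
\begin{equation*}
\EE\bigl[U(f_{n-1}+d_n, f_{n-1}^*) \,\big|\, \mathcal{F}_{n-1}\bigr] \leq U(f_{n-1}, f_{n-1}^*) = Z_{n-1}.
\end{equation*}
Taking unconditional expectations and chaining, $\EE Z_n \leq \EE Z_{n-1} \leq \cdots \leq \EE Z_0 \leq 0$.

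Finally, the majorization condition applied at $(x,y) = (f_n, f_n^*)$ (which satisfies $x\leq y$) gives $V(f_n, f_n^*) \leq U(f_n, f_n^*) = Z_n$, so
\begin{equation*}
\EE|f_n - f_n^*|^p - C^p\,\EE|f_n|^p = \EE V(f_n, f_n^*) \leq \EE Z_n \leq 0,
\end{equation*}
which is exactly $\|f_n-f_n^*\|_p \leq C\|f_n\|_p$. The only subtle point I anticipate is the integrability issue hidden in the reduction step: a priori $U$ may grow faster than $|x|^p + |y|^p$, so one either assumes $f$ is bounded (or simple) at this stage and extends by approximation later, or one imposes a growth bound on $U$ and checks it for the specific $U$ constructed in subsequent sections. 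Everything else is a routine application of the four hypotheses in the order: initial, maximal, concavity (Jensen), majorization.
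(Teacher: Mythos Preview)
Your argument is correct and matches the paper's proof essentially step for step: reduce to simple martingales, apply the maximal condition to freeze the second coordinate at $f_{n-1}^*$, then use concavity plus the martingale property (the paper writes this via the tangent-line inequality with $U_{x^+}$ rather than conditional Jensen, but that is the same thing), and finish with the initial and majorization conditions. Your remark about the integrability/reduction step is exactly the caveat the paper handles by passing to simple martingales at the outset.
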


\begin{proof}
It suffices to consider the inequality  for simple martingales (in which case all expressions below are integrable).
Conditions 3 and 4 imply that
\begin{align*}
\EE U(f_n,f^*_n) &= \EE U\big(f_{n-1} + (f_n-f_{n-1}), (f_{n-1} + (f_n-f_{n-1}))\lor f^*_{n-1}\big)\\
&\leq \EE U(f_{n-1} + (f_n-f_{n-1}), f^*_{n-1})\\
&\leq \EE U(f_{n-1}, f^*_{n-1}) + \EE (f_n-f_{n-1}) U_{x^+}(f_{n-1}, f^*_{n-1}),
\end{align*}
where $U_{x^+}$ denotes the right derivative.
Moreover, $\EE (f_n-f_{n-1}) U_{x^+}(f_{n-1}, f^*_{n-1})=0$ because $f$ is a martingale. Hence, $\EE U(f_n,f^*_n)\leq \EE U(f_{n-1}, f^*_{n-1})$. Thus, using Conditions 1 and 2, we arrive at
\begin{multline*}
\| f_n -  f^*_n \|_p^p -C^p \|f_n\|_p^p = \EE V(f_n,f^*_n)\\
 \leq \EE U(f_n,f^*_n)
\leq \ldots\leq  \EE U(f_0, f^*_0) =  \EE U(f_0, f_0)\leq 0.
\end{multline*}
This ends the proof.
\end{proof}

\begin{remark} In the above proof it is enough to have $\EE (f_n-f_{n-1}) U_{x^+}(f_{n-1}, f^*_{n-1})\leq 0$. This inequality holds if $f$ is a nonnegative submartingale and $U_{x^+}(x,y)\leq 0$ for $y\geq 0$. This additional assumption is satisfied by the function $U$ which we construct in Section~\ref{sec:majorant}. In particular, for any martingale $(f_n)_{n=0}^{\infty}$ also
\begin{equation*}
\big\||f_n| - \sup_{0\leq j \leq n} |f_j|\big\|_p \leq \Cp \|f_n\|_p
\end{equation*}
holds, since $(|f_n|)_{n=0}^{\infty}$ is a nonnegative submartingale whenever  $(f_n)_{n=0}^{\infty}$ is a martingale. This bound is sharp in the case $1<p\leq 2$ (see the example in Section~\ref{sec:proof-mart}), but the constant $\Cp$ does not seem to be the best possible for $p>2$.
\end{remark}

There is an abstract way for finding a candidate for the function from Proposition~\ref{prop:method}. Namely, let $V(x,y) = |x-y|^p - C^p|x|^p$ and define
\begin{equation*}
U^0(x,y) = \sup \{ \EE V(f_{\infty}, f^*_{\infty}\lor y ) : f_0=x\},
\end{equation*}
where the supremum is taken over the class $\mathcal{M}$ consisting of all simple martingales $f=(f_n)_{n=0}^{\infty}$ on the probability space $[0,1]$ equipped with the Borel $\sigma$-algebra and the Lebesgue measure (the filtration may vary). This approach has one main drawback: the expression defining $U^0$ is hard to work with. Nonetheless, we can use the function $U^0$ to extract important information: a lower bound for the constant $C=C(p)$, with which the martingale inequality holds (and on which the function $V$ depends). For explicit examples of extremal martingales see Section~\ref{sec:proof-mart}.

\begin{proof}[Sharpness of \eqref{ineq:mart}] let $1<p<\infty$ be fixed.
First note that by the triangle and Doob's inequality the estimate~\eqref{ineq:mart}  holds with some finite constant. Let us denote it by $C$ (of course it may depend on $p$) and let $V$, $U^0$ be the functions defined in the preceding paragraph. Note that, as for now, we do not claim that $U^0<+\infty$. 

Clearly, $U^0(x,y)\geq V(x,x\lor y)$ (since a constant martingale, $f_n\equiv x$, belongs to $\mathcal{M}$), $U^0(x,y)= U^0(x,x\lor y)$ (since $f_0\leq f_{\infty}^*$), and $U^0(ax,ay) = |a|^p U^0(x,y)$. A~``splicing'' argument (cf. \cite{MR2964297}) gives us concavity of $U^0(\cdot,y)$: if $\lambda\in(0,1)$, $f,g\in \mathcal{M}$, $f_0=x_1$, and $g_0=x_2$, then the process defined by  $h_0=\lambda x_1 + (1-\lambda) x_2$ and
\begin{equation*}
h_n(\omega) =
	\lambda f_{n-1}(\omega/\lambda) \indbr{\omega\in [0,\lambda)} + (1- \lambda) g_{n-1}\big((\omega-\lambda)/(1-\lambda)\big) \indbr{\omega\in [\lambda,1)}, \quad  n\geq 1,
\end{equation*}
is a simple martingale starting from $x=\lambda x_1 + (1-\lambda)x_2$. Hence
\begin{equation*}
U^0(x,y) \geq \EE V(h_{\infty},h_{\infty}^*\lor y) = \lambda \EE V(f_{\infty},f_{\infty}^*\lor y) + (1-\lambda)\EE V(g_{\infty},g_{\infty}^*\lor y),
\end{equation*}
which after taking the suprema over $f$ and $g$ yields the claim.

Moreover, if $f\in\mathcal{M}$ satisfies $f_0=y$, then $\EE V(f_{\infty}, f_{\infty}^*\lor y) = \EE V(f_{\infty}, f_{\infty}^*)\leq 0$, where the inequality follows from the assumption that the martingale inequality is satisfied with constant $C$. Therefore $U^0(y,y)\leq 0$ for all $y\in\RR$, and hence $U^0(x,y)<+\infty$ for any $x,y\in\RR$. Indeed, $U^0$ is concave with respect to the first variable, and a concave function on the  real line, which takes values in the set $(-\infty,+\infty]$, and is equal to $+\infty$ at some point, is identically equal to $+\infty$.

We now exploit the function $U^0$ to get an estimate of the constant $C$.  Fix $\alpha \leq (p-1)/p$ and $\delta, t\in(0,1)$. The properties of $U^0$ imply
\begin{align*}
U^0(1,1)
&\geq \frac{\delta}{1-\alpha+\delta} U^0(\alpha,1) + \frac{1-\alpha}{1-\alpha+\delta} U^0(1+\delta,1) \\
&\geq  \frac{\delta}{1-\alpha+\delta} V(\alpha,1)  + \frac{1-\alpha}{1-\alpha+\delta} U^0(1+\delta,1+\delta)\\
&\geq\frac{\delta}{1-\alpha+\delta} V(\alpha,1)\\
\MoveEqLeft[-2] +(1-t)\frac{1-\alpha}{1-\alpha+\delta} (1+\delta)^p U^0(1,1)
+ t\frac{1-\alpha}{1-\alpha+\delta} (1+\delta)^p V(1,1),
\end{align*}
which can be rewritten in the form
\begin{equation*}
U^0(1,1) \frac{1-\alpha+\delta - (1-t)(1-\alpha)(1+\delta)^p }{\delta}
\geq V(\alpha,1) + t\frac{1-\alpha}{\delta} (1+\delta)^p V(1,1).
\end{equation*}
Now, for $\delta<1/p$, we put $t=\delta(p-1/(1-\alpha))$ (note that $t\in[0,1]$, since $\alpha\leq (p-1)/p$), take $\delta\to 0^+$, and arrive at
\begin{equation*}
0\geq V(\alpha,1)+(p(1-\alpha)-1) V(1,1).
\end{equation*}
Using the definition of the function $V$ we can solve this inequality with respect to $C$. Taking the supremum over $\alpha\leq (p-1)/p$ yields then
\begin{equation*}
C^p \geq \sup_{\alpha\leq (p-1)/p}\frac{|\alpha-1|^p}{p(1-\alpha)-1 + |\alpha|^p}
\end{equation*}
(note that $p(1-\alpha)-1+|\alpha|^p$ is strictly positive for $\alpha\neq 1$, since the function $\alpha\mapsto p\alpha-p+1$ is tangent to the convex function $\alpha\mapsto |\alpha|^p$ at $\alpha=1$).
Hence the best constant with which the martingale inequality is satisfied is not smaller than the right-hand side of the above inequality.
\end{proof}

\subsection{Finding the concave majorant.}

In this subsection we give some informal reasoning, which is helpful in guessing an explicit formula for the function $U$ satisfying the assumptions of Proposition~\ref{prop:method}. As before, we denote $V(x,y)=|x-y|^p-C^p|x|^p$. We look for a function $U$ such that $U(\cdot, y)$ is not only concave, but even affine: let
\begin{equation*}
U(x,y) = p(|\alpha y - y|^{p-2} (\alpha y - y) - C^p|\alpha y|^{p-2}\alpha y)(x-\alpha y) + |\alpha y - y|^{p} - C^p|\alpha y|^{p}
\end{equation*}
be the tangent to $V(\cdot,y)$ at the point $x=\alpha y$ (for some $\alpha$, yet to be determined). Note that if $V$ was concave with respect to the first variable, then such a choice of $U$ would automatically guarantee the majorization property (i.e. $V(x,y)\leq U(x,y)$). Unfortunately, this is not the case in our setting (cf.~Lemma~\ref{lem:3con-pm}).

The maximal condition states that $U(x+h, x+h)\leq U(x+h,y)$ for $x+h>y$, and implies $U_y(x,x) \leq 0$. Let us assume that  $U_y(x,x) = 0$. Some calculations reveal that this condition is equivalent  to 
\begin{equation*}
C^p  = |\alpha^{-1}-1|^{p-2}(\alpha^{-1} - 1)\big(\big((p-1)(1-\alpha)\big)^{-1}  - 1\big)
\end{equation*}
(provided that $\alpha\notin\{ 0, 1\}$). Taking such $C$ we arrive at
\begin{equation*}
U(x,y) = -    \frac{|1 - \alpha|^{p-2}}{p-1}  |y|^{p-2}y(px-y(p-1)).
\end{equation*}
Note that for this choice  the initial condition (i.e. $U(x,x)\leq0$) is also satisfied.

Moreover, the preceding subsection suggests that for the right choice of $\alpha$ we should have 
\begin{multline*}
|\alpha^{-1}-1|^{p-2}(\alpha^{-1} - 1)\big(\big((p-1)(1-\alpha)\big)^{-1}  - 1\big)
  = \sup_{\alpha'\leq (p-1)/p}\frac{|\alpha'-1|^p}{p(1-\alpha')-1 + |\alpha'|^p}.
\end{multline*}
We identify the correct values of $\alpha=\alpha(p)$ and $C=C(p)$ in some technical lemmas in the next section. This is relatively easy in the case $1<p\leq2$, where one can simply take $\alpha(p) = (p-1)/p$.

In Section~\ref{sec:majorant}, we check that for these choices the function $U$ is indeed the majorant of $V$. We prove the martingale inequality~\eqref{ineq:mart} in Section~\ref{sec:proof-mart}.

\subsection{Technical lemmas}
  \label{sec:tech-lem}

The first three results are needed to identify the value of the optimal constant in the martingale inequality~\eqref{ineq:mart}.

\begin{lemma}\label{lem:alpha-a}
For each $p\in(2,\infty)$, there exists exactly one number $\alpha_p\leq (p-1)/p$ such that
\begin{equation*}
 \frac{|\alpha_p-1|^p}{p(1-\alpha_p)-1 + |\alpha_p|^p} = \sup_{\alpha\leq (p-1)/p} \frac{|\alpha-1|^p}{p(1-\alpha)-1 + |\alpha|^p}=\sup_{\alpha\neq 1} \frac{|\alpha-1|^p}{p(1-\alpha)-1 + |\alpha|^p}.
\end{equation*}
Moreover, $(p-1)\alpha_p+ 2-p = |\alpha_p|^{p-2}\alpha_p$ and $\alpha_p < -(p-1)^{1/(p-2)}<0$.
\end{lemma}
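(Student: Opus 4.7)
The plan is to identify $\alpha_p$ as the unique critical point of the smooth function
\[ \varphi(\alpha) := \frac{|\alpha-1|^p}{p(1-\alpha) - 1 + |\alpha|^p} \]
on $\RR\setminus\{1\}$, to verify that it lies to the left of $-(p-1)^{1/(p-2)}$, and to check that the critical value strictly exceeds the asymptotic value $\lim_{\alpha\to\pm\infty}\varphi(\alpha)=1$. The denominator equals $|\alpha|^p - (p\alpha-p+1)$, which is nonnegative by tangency of the convex function $|\alpha|^p$ with the line $p\alpha-p+1$ at $\alpha=1$ and vanishes only there, as already noted in the paper; so $\varphi$ is smooth on $\RR\setminus\{1\}$.

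First I would differentiate $\varphi$ and collect terms using $|\alpha-1|^p = |\alpha-1|^{p-2}(\alpha-1)^2$. A direct computation yields the factorization
\[ D(\alpha)^2\,\varphi'(\alpha) \;=\; p\,|\alpha-1|^{p-2}(\alpha-1)\,G(\alpha), \]
where $D$ denotes the denominator of $\varphi$ and
\[ G(\alpha) := |\alpha|^{p-2}\alpha - (p-1)\alpha + (p-2). \]
Hence the critical points of $\varphi$ in $\RR\setminus\{1\}$ are exactly the zeros of $G$ different from $1$, and this is precisely the equation stated in the lemma.

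Next, since $G'(\alpha) = (p-1)(|\alpha|^{p-2}-1)$, the function $G$ is decreasing on $(-1,0)\cup(0,1)$ and increasing on $(-\infty,-1)\cup(1,\infty)$; one reads off $G(1)=0$, $G(-1)=2p-4>0$, $G(0)=p-2>0$, and $G(\alpha)\to-\infty$ as $\alpha\to-\infty$. Therefore $G$ has exactly one zero $\alpha_p$ distinct from $1$, and it lies in $(-\infty,-1)$. Plugging $\alpha_0 := -(p-1)^{1/(p-2)}$ into $G$ makes the first two terms cancel, giving $G(\alpha_0)=p-2>0$; combined with $\alpha_0<-1$ and monotonicity of $G$ on $(-\infty,-1)$, this forces $\alpha_p<\alpha_0=-(p-1)^{1/(p-2)}$.

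Finally, the sign of $\varphi'(\alpha)$ equals the sign of $(\alpha-1)G(\alpha)$, so $\varphi$ is increasing on $(-\infty,\alpha_p)\cup(1,\infty)$ and decreasing on $(\alpha_p,1)\setminus\{0\}$; on $(1,\infty)$ it increases from $0$ to the asymptote $1$. Using the defining relation $|\alpha_p|^{p-2}\alpha_p = (p-1)\alpha_p - (p-2)$ I would compute $|\alpha_p|^p = \alpha_p\cdot|\alpha_p|^{p-2}\alpha_p = (p-1)\alpha_p^2-(p-2)\alpha_p$, whence the clean factorization $D(\alpha_p)=(p-1)(\alpha_p-1)^2$ and
\[ \varphi(\alpha_p) \;=\; \frac{(1+|\alpha_p|)^{p-2}}{p-1}. \]
Since $|\alpha_p| > (p-1)^{1/(p-2)}$, this value strictly exceeds $1$, so $\alpha_p$ is the unique global maximizer of $\varphi$ on $\RR\setminus\{1\}$; and because $\alpha_p<0<(p-1)/p$, the two suprema in the statement coincide. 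The main obstacle will be the algebraic collapse producing $\varphi'(\alpha)\propto (\alpha-1)G(\alpha)$, together with the coincidence $D(\alpha_p)=(p-1)(\alpha_p-1)^2$; once these identities are in hand, the remainder is an elementary monotonicity analysis of $G$.
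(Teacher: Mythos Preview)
Your proof is correct and follows essentially the same route as the paper's: compute $\varphi'$, factor out $(\alpha-1)G(\alpha)$ with $G(\alpha)=|\alpha|^{p-2}\alpha-(p-1)\alpha+(p-2)$, locate the unique nontrivial zero $\alpha_p<-1$, read off the monotonicity of $\varphi$, and compare with the asymptotic value $1$. The only cosmetic differences are that the paper locates the zeros of $G$ via a tangency/convexity argument for $\alpha\mapsto|\alpha|^{p-2}\alpha$ against the line $(p-1)\alpha+2-p$ rather than via $G'$, and that your explicit computation of $D(\alpha_p)=(p-1)(\alpha_p-1)^2$ and $\varphi(\alpha_p)=(1+|\alpha_p|)^{p-2}/(p-1)$ anticipates the paper's next lemma, where these identities are established separately.
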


\begin{proof}
Recall that $p(1-\alpha)-1+|\alpha|^p$ is strictly positive for $\alpha\neq 1$ (since the function $\alpha\mapsto p\alpha-p+1$ is tangent to the convex function $\alpha\mapsto |\alpha|^p$ at $\alpha=1$). Moreover, $\lim_{\alpha\to 1} |\alpha -1|^p/(p(1-\alpha)-1+|\alpha|^p) = 0$. Hence the function
\begin{equation*}
h(\alpha) =\frac{|\alpha-1|^p}{p(1-\alpha)-1 + |\alpha|^p}\indbr{\alpha\neq 1}
\end{equation*}
is continuous. Its derivative (for $\alpha\neq 1$) is equal to
\begin{equation*}
h'(\alpha)=\frac{ p|\alpha-1|^{p-2}(\alpha-1)(p(1-\alpha)-1 + |\alpha|^p) - |\alpha-1|^p (-p +p |\alpha|^{p-2}\alpha)}{(p(1-\alpha)-1 + |\alpha|^p)^2},
\end{equation*}
which is nonpositive if and only if
\begin{equation*}
(\alpha-1)\big((p(1-\alpha)-1 + |\alpha|^p) -(\alpha-1) (-1 + |\alpha|^{p-2}\alpha)\big)\leq 0,
\end{equation*}
which we can simplify to
\begin{equation}\label{eq:alpha-az}
(\alpha-1)\big(|\alpha|^{p-2}\alpha -(p-1)\alpha - 2+p\big)\leq 0
\end{equation}

The function $\alpha\mapsto(p-1)\alpha + 2-p$ is linear and tangent (at $\alpha=1$) to the function $\alpha\mapsto |\alpha|^{p-2}\alpha$, which is strictly concave on $(-\infty,0]$ and strictly convex on $[0,\infty)$. Therefore, the equation $(p-1)\alpha + 2-p = |\alpha|^{p-2}\alpha$ has exactly one negative solution, which we denote by $\alpha_p$. Moreover, the  inequality~\eqref{eq:alpha-az} holds if and only if $\alpha\in[\alpha_p,1]$. Hence, the function $h$ is increasing on $(-\infty, \alpha_p]$, decreasing on $[\alpha_p,1]$, and increasing on $[1,\infty)$. The observation that
$\lim_{\alpha\to\pm \infty} h(\alpha)=1$ ends the proof of the first part of the lemma.

Moreover, the inequality~\eqref{eq:alpha-az} does hold for $\alpha= -(p-1)^{1/(p-2)}$ and hence $\alpha_p < -(p-1)^{1/(p-2)}$.
\end{proof}

\begin{lemma}\label{lem:alpha-b}
Let $\alpha_p$ be the number defined for $p>2$ in Lemma~\ref{lem:alpha-a}. Then
\begin{multline*}
|\alpha_p^{-1}-1|^{p-2}(\alpha_p^{-1}-1)\big(\big((p-1)(1-\alpha_p)\big)^{-1}-1\big)\\
= \frac{|\alpha_p-1|^p}{p(1-\alpha_p)-1 + |\alpha_p|^p} = \frac{(1+|\alpha_p|)^{p-2}}{p-1} >1.
\end{multline*}
\end{lemma}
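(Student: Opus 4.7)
The plan is to substitute $a := |\alpha_p| = -\alpha_p > 0$ and exploit the defining relation from Lemma~\ref{lem:alpha-a}. With $\alpha_p = -a$, the identity $(p-1)\alpha_p + 2-p = |\alpha_p|^{p-2}\alpha_p$ becomes
\begin{equation*}
a^{p-1} = (p-1)a + (p-2),
\end{equation*}
which will be the single tool driving every cancellation. I would also record the useful rewriting $(p-1)(1+a) - 1 = (p-1)a + (p-2) = a^{p-1}$.

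Next I would verify the two equalities by direct computation. For the first expression, note $\alpha_p^{-1}-1 = -(1+a)/a < 0$, so $|\alpha_p^{-1}-1|^{p-2}(\alpha_p^{-1}-1) = -((1+a)/a)^{p-1}$. Similarly, $((p-1)(1-\alpha_p))^{-1} - 1 = -((p-1)(1+a)-1)/((p-1)(1+a)) = -a^{p-1}/((p-1)(1+a))$ by the defining relation. Multiplying and simplifying yields
\begin{equation*}
|\alpha_p^{-1}-1|^{p-2}(\alpha_p^{-1}-1)\bigl(((p-1)(1-\alpha_p))^{-1}-1\bigr) = \frac{(1+a)^{p-2}}{p-1}.
\end{equation*}
For the middle expression, after substituting $\alpha_p=-a$ the equality
\begin{equation*}
\frac{(1+a)^p}{p(1+a) - 1 + a^p} = \frac{(1+a)^{p-2}}{p-1}
\end{equation*}
reduces, upon cross-multiplying and dividing by $(1+a)^{p-2}\cdot a$, exactly to $(p-1)a + (p-2) = a^{p-1}$, so the identity is again a direct consequence of the defining equation. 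The algebraic manipulations are routine; the only insight is spotting that the quantity $(p-1)(1+a) - 1$ coincides with $a^{p-1}$, which is what makes everything collapse.

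Finally, for the strict inequality $(1+a)^{p-2}/(p-1) > 1$, I would invoke the last statement of Lemma~\ref{lem:alpha-a}, namely $\alpha_p < -(p-1)^{1/(p-2)}$, so $a > (p-1)^{1/(p-2)}$. Since $p-2 > 0$, this gives $(1+a)^{p-2} > a^{p-2} > p-1$, finishing the proof. I do not expect any real obstacle here; the only thing that needs care is tracking signs when removing the absolute values (since $\alpha_p$ is negative and $\alpha_p^{-1}-1$ is also negative), and noticing the algebraic miracle that lets the defining relation clear all the terms.
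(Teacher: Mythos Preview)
Your proof is correct and follows essentially the same route as the paper's: both arguments reduce the two equalities to the defining relation $(p-1)\alpha_p+2-p=|\alpha_p|^{p-2}\alpha_p$ (your substitution $a=-\alpha_p$ just makes the sign bookkeeping cleaner), and both deduce the strict inequality from the bound $\alpha_p<-(p-1)^{1/(p-2)}$ in Lemma~\ref{lem:alpha-a}. The paper's version of the denominator identity is phrased as $p(1-\alpha_p)-1+|\alpha_p|^p=(p-1)(\alpha_p-1)^2$, which is exactly your cross-multiplied equation after the change of variable.
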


\begin{proof}
We have
\begin{equation*}
\big((p-1)(1-\alpha_p)\big)^{-1} - 1 = \frac{(p-1)\alpha_p+ 2-p}{(p-1)(1-\alpha_p)} = \frac{|\alpha_p|^{p-2}\alpha_p}{(p-1)(1-\alpha_p)}
\end{equation*}
and hence the first and third expressions are equal.
Also
\begin{equation*}
p(1-\alpha_p)-1 + |\alpha_p|^p = p(1-\alpha_p)-1 +\alpha_p((p-1)\alpha_p+2-p) = (p-1)(\alpha_p-1)^2
\end{equation*}
and hence the second and third expressions are equal. Finally, the inequality follows directly from the estimate for $\alpha_p$ from the preceding lemma.
\end{proof}

\begin{lemma}\label{lem:alpha-c} Let $p\in(1,2)$. If we denote $\alpha_p = (p-1)/p$, then
\begin{multline*}
\frac{1}{(p-1)^p} = \frac{|\alpha_p-1|^p}{p(1-\alpha_p)-1 + |\alpha_p|^p} = \sup_{\alpha\leq (p-1)/p} \frac{|\alpha-1|^p}{p(1-\alpha)-1 + |\alpha|^p} \\
= |\alpha_p^{-1}-1|^{p-2}(\alpha_p^{-1}-1)\big(\big((p-1)(1-\alpha_p)\big)^{-1}-1\big).
\end{multline*}
\end{lemma}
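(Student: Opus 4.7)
The plan is to dispatch the three equalities at $\alpha_p = (p-1)/p$ separately. The first and the third are pure arithmetic: from $1-\alpha_p = 1/p$ one reads off $p(1-\alpha_p)-1 = 0$, $|\alpha_p - 1|^p = p^{-p}$, and $|\alpha_p|^p = (p-1)^p p^{-p}$, which collapses the ratio in the first equality to $1/(p-1)^p$. Likewise $\alpha_p^{-1}-1 = 1/(p-1)$ and $\big((p-1)(1-\alpha_p)\big)^{-1} - 1 = 1/(p-1)$, so the product in the third equality telescopes to $(1/(p-1))^{p-2} \cdot (1/(p-1)) \cdot (1/(p-1)) = 1/(p-1)^p$. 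The real content is therefore the middle equality: that for $1 < p < 2$, the supremum of
\[
h(\alpha) := \frac{|\alpha-1|^p}{p(1-\alpha)-1+|\alpha|^p}
\]
over $\alpha \leq (p-1)/p$ equals $h(\alpha_p) = 1/(p-1)^p$.

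For the supremum I would recycle verbatim the derivative computation from the proof of Lemma~\ref{lem:alpha-a}; it nowhere uses $p > 2$, and it shows that on $\{\alpha \neq 1\}$ the sign of $h'(\alpha)$ matches the sign of $(\alpha-1)f(\alpha)$, where $f(\alpha) := |\alpha|^{p-2}\alpha - (p-1)\alpha - (2-p)$. What changes for $p \in (1,2)$ is the sign analysis of $f$: the auxiliary function $\alpha \mapsto |\alpha|^{p-2}\alpha$ is now \emph{concave} on $(0,\infty)$ and \emph{convex} on $(-\infty, 0)$, the opposite of the $p > 2$ regime. Tangency at $\alpha = 1$ to the line $(p-1)\alpha + (2-p)$ together with concavity on $(0,\infty)$ forces $f \leq 0$ on $(0,\infty)$, with equality only at $\alpha = 1$. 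On the negative half-line, $f'(\alpha) = (p-1)(|\alpha|^{p-2}-1)$ is positive on $(-1,0)$ and negative on $(-\infty,-1)$, while $f(-1) = 2p-4 < 0$, $f(0^-) = p-2 < 0$, and $f(\alpha) \to +\infty$ as $\alpha \to -\infty$ (because $(p-1)|\alpha|$ outgrows $|\alpha|^{p-1}$); so $f$ vanishes at a unique $\alpha^* \in (-\infty, -1)$, is positive on $(-\infty, \alpha^*)$, and is negative on $(\alpha^*, 1)$.

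Transferring this picture back to $h$: the function is decreasing on $(-\infty, \alpha^*]$ with $\lim_{\alpha \to -\infty} h(\alpha) = 1$, and increasing on $[\alpha^*, 1)$. Hence on $(-\infty, (p-1)/p]$ the supremum is the larger of $1$ (approached only in the limit) and $h((p-1)/p) = 1/(p-1)^p$; since $(p-1)^p < 1$ for $p \in (1,2)$, the latter wins, yielding the middle equality with the supremum actually attained at $\alpha_p$. The step I expect to require the most care is the sign analysis of $f$ on $(-\infty, 0)$: one must verify that the convexity flip of $\alpha \mapsto |\alpha|^{p-2}\alpha$ when passing from $p > 2$ to $p \in (1,2)$ does not hide a competing local maximum of $h$ on $(-\infty, 0)$ that could beat $1/(p-1)^p$. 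The monotonicity dichotomy above is precisely what rules this out, and it relies essentially on the asymptote at $-\infty$ being only $1$, comfortably below $1/(p-1)^p$.
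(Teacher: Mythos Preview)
Your proof is correct and follows exactly the approach the paper intends: the paper leaves the details to the reader, noting only that the argument is ``less involved than in the case $p>2$'' (i.e., a replay of Lemma~\ref{lem:alpha-a}), and you carry this out faithfully by recycling the derivative computation and adapting the sign analysis of $f(\alpha)=|\alpha|^{p-2}\alpha-(p-1)\alpha-(2-p)$ to the regime $1<p<2$. Your identification of the unique negative root $\alpha^*<-1$, the resulting monotonicity picture for $h$ on $(-\infty,(p-1)/p]$, and the comparison $1/(p-1)^p>1$ are all correct and constitute exactly the verification the paper omits.
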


The proof is less involved than in the case $p>2$. Therefore we leave the details of checking that the function
\begin{equation*}
\alpha \mapsto \frac{|\alpha-1|^p}{p(1-\alpha)-1 + |\alpha|^p}, \quad \alpha\in(-\infty,(p-1)/p],
\end{equation*}
attains its maximum at $\alpha=(p-1)/p$ to the Readers. Let us only remark that in contrast to the case $p>2$, for $1<p<2$ we have
\begin{equation*}
 \sup_{\alpha\neq 1} \frac{|\alpha-1|^p}{p(1-\alpha)-1 + |\alpha|^p} = \infty.
\end{equation*}

We also need the following technical lemma.

\begin{lemma}\label{lem:zzz}
If $p\in(1,2)$, then
\begin{align*}
p^{p-2} &\geq (p-1)^{p-1},\\
(p+1)^{p-1}&\geq (2p-1)(p-1)^{p-1}.
\end{align*}
\end{lemma}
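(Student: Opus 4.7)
The plan is to reduce each inequality to a convexity/concavity statement for an auxiliary function that vanishes at both endpoints $p=1$ and $p=2$, and then to invoke the fact that a convex (resp.\ concave) function equal to the same value at the two endpoints of an interval lies below (resp.\ above) its chord in the interior.

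For the first inequality, take logarithms so that the claim becomes
\[
g(p):=(p-1)\ln(p-1)-(p-2)\ln p\leq 0\quad\text{on }(1,2).
\]
Since $(p-1)\ln(p-1)\to 0$ as $p\to 1^+$, one checks directly that $g(1^+)=g(2)=0$. Differentiating twice,
\[
g''(p)=\frac{1}{p(p-1)}-\frac{2}{p^2}=\frac{2-p}{p^2(p-1)}>0\quad\text{for }p\in(1,2),
\]
so $g$ is strictly convex on $(1,2)$. Together with the endpoint values this forces $g\leq 0$ there, which is the first inequality.

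For the second inequality the equivalent logarithmic reformulation is
\[
h(p):=(p-1)\ln\frac{p+1}{p-1}-\ln(2p-1)\geq 0\quad\text{on }(1,2),
\]
with $h(1^+)=h(2)=0$ again verified directly. A short computation yields
\[
h''(p)=\frac{4}{(2p-1)^2}-\frac{4}{(p+1)^2(p-1)},
\]
so $h''\leq 0$ is equivalent to $(2p-1)^2\geq (p+1)^2(p-1)$. Expanding and using the factorization $(p-1)^3-1=(p-2)(p^2-p+1)$ gives
\[
(2p-1)^2-(p+1)^2(p-1)=-p^3+3p^2-3p+2=(2-p)(p^2-p+1),
\]
which is strictly positive on $(1,2)$ since $p^2-p+1=(p-\tfrac12)^2+\tfrac34>0$. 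Hence $h$ is concave on $(1,2)$, and combined with $h(1^+)=h(2)=0$ this yields $h\geq 0$, proving the second inequality.

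The only nontrivial algebraic step is the factorization $-p^3+3p^2-3p+2=(2-p)(p^2-p+1)$, which becomes transparent once one notices that $p=2$ is a root of the cubic; otherwise the argument is a routine application of the chord property of convex and concave functions on an interval.
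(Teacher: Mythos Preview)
Your proof is correct and follows essentially the same route as the paper: both arguments verify that the log-difference vanishes at the endpoints $p=1$ and $p=2$ and then compute the second derivative to establish concavity (equivalently, convexity of the negative). The only cosmetic difference is that for the second inequality the paper substitutes $s=p-1$, which turns your cubic $-p^3+3p^2-3p+2=(2-p)(p^2-p+1)$ into the slightly tidier $1-s^3$; the two computations are literally the same.
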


\begin{proof}
Both  inequalities are satisfied in the limit for $p\to 1^+$ and $p\to2^-$. To prove the first, we notice that the difference of the logarithms of both sides is a concave function since
\begin{equation*}
\big( (p-2)\ln (p)- (p-1)\ln (p-1)\big)'' = \frac{p-2}{(p-1)p^2}\leq 0,
\end{equation*}
for $p\in(1,2)$.

In order to prove the second inequality, we substitute  $s=p-1$, divide both sides by $(2s+1)s^s$, take the logarithm of both sides, and arrive at the following equivalent formulation of the assertion:
\begin{equation*}
s\ln (1+2/s)- \ln (2s+1) \geq 0, \quad s\in(0,1).
\end{equation*}
The left-hand side is a concave function, since  for $s\in(0,1)$,
\begin{equation*}
\big( s\ln (1+2/s)- \ln (2s+1)\big)'' = \frac{4(s^3-1)}{s(s+2)^2(2s+1)^2}\leq 0.
\end{equation*}
Hence the assertion of the lemma holds.
\end{proof}

\subsection{The special function}
  \label{sec:majorant}

Define the constant $\Cp$ by the formula
\begin{equation*}\label{eq:constant-intro}
\Cp ^p= \sup_{\alpha\leq (p-1)/p} \frac{|\alpha-1|^p}{p(1-\alpha)-1 + |\alpha|^p} = 
  \begin{cases}
  \frac{1}{(p-1)^p} & \text{if } 1<p\leq 2,\\
  \frac{(1+|\alpha_p|)^{p-2}}{p-1} & \text{if } p> 2.
  \end{cases}
\end{equation*}
Here $\alpha_p\in\RR$, $2<p<\infty$ is the unique negative solution to the equation $(p-1)\alpha_p + 2-p = |\alpha_p|^{p-2}\alpha_p$ (see~Lemma~\ref{lem:alpha-a}). We also denote $\alpha_p=(p-1)/p$ for $1<p\leq 2$.

We introduce the special functions
\begin{align*}
V(x,y) &= |x-y|^p - \Cp^p|x|^p,\\
U(x,y) &=  -  \frac{|1-\alpha_p|^{p-2}}{p-1} |y|^{p-2}y(px-(p-1)y)\\
&=
  \begin{cases}
   - \frac{1}{(p-1)p^{p-2}} |y|^{p-2}y(px-(p-1)y)  & \text{if } 1<p \leq 2,\\
   -  \Cp^p |y|^{p-2}y(px-(p-1)y)  & \text{if } p>2.
  \end{cases}
\end{align*}
The following proposition is the core of the proof of the martingale inequality and the main result (in the case $m=0$, $\lambda=1$). Note that the assertion is stronger than  the majorization condition from Proposition~\ref{prop:method}.

\begin{proposition}\label{prop:majorant}
For $1<p<\infty$ and any $x,y\in\RR$, we have $V(x,y)\leq U(x,y)$.
\end{proposition}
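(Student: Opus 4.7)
My plan is to reduce the two-variable inequality to a one-variable one via symmetry and homogeneity, and then prove the one-variable version by exploiting the tangency construction of $U$ together with elementary calculus on the resulting function. Note that both $V$ and $U$ are positively $p$-homogeneous, $V(\lambda x,\lambda y)=\lambda^p V(x,y)$ and $U(\lambda x,\lambda y)=\lambda^p U(x,y)$ for $\lambda>0$, and invariant under the central reflection $(x,y)\mapsto(-x,-y)$ (the two sign changes in $U$ cancel), so it suffices to check the inequality at $y=0$ and $y=1$. The case $y=0$ reduces to $V(x,0)=(1-\Cp^p)|x|^p\le 0=U(x,0)$, which holds because $\Cp\ge 1$ by Lemmas~\ref{lem:alpha-b}--\ref{lem:alpha-c}. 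Setting $G(x):=U(x,1)-V(x,1)$, i.e.\
\begin{equation*}
G(x)=\Cp^p|x|^p-|x-1|^p-\tfrac{|1-\alpha_p|^{p-2}}{p-1}\bigl(px-(p-1)\bigr),
\end{equation*}
the remaining task is to show $G\ge 0$ on $\RR$.

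By construction $U(\cdot,1)$ is the tangent line to $V(\cdot,1)$ at $x=\alpha_p$ (as sketched in Section~2.3), so $G(\alpha_p)=0$ and $G'(\alpha_p)=0$ are automatic. A direct differentiation gives
\begin{equation*}
G''(x)=p(p-1)\bigl(\Cp^p|x|^{p-2}-|x-1|^{p-2}\bigr),
\end{equation*}
and substituting $x=\alpha_p$ yields $G''(\alpha_p)\ge 0$: for $1<p\le 2$ this is immediate from $\alpha_p=(p-1)/p$, while for $p>2$ one uses the identity $|\alpha_p|^{p-2}\alpha_p=(p-1)\alpha_p+2-p$ and the strict bound $|\alpha_p|>(p-1)^{1/(p-2)}$ from Lemma~\ref{lem:alpha-a} together with the representation $\Cp^p=(1+|\alpha_p|)^{p-2}/(p-1)$ from Lemma~\ref{lem:alpha-b}. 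Thus $\alpha_p$ is a local minimum of $G$ at height $0$.

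Since $\Cp>1$ for $p\neq 2$ (and $V(\cdot,1)\equiv U(\cdot,1)$ at $p=2$, both being affine), $G(x)\to+\infty$ as $|x|\to\infty$, so it remains to rule out a second critical point at which $G\le 0$. I partition $\RR$ into the subintervals delimited by the zeros of $G''$ (the roots of $\Cp^p|x|^{p-2}=|x-1|^{p-2}$, which separate the concavity and convexity regions of $V(\cdot,1)$) and, for $1<p<2$, by the non-smooth points $\{0,1\}$. On each resulting subinterval $G'$ is monotone, hence has at most one zero; tracking the signs of $G'$ at the endpoints of each subinterval and at $\pm\infty$ then pins down $\alpha_p$ as the unique critical point of $G$ with $G(\cdot)=0$. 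In the range $1<p\le 2$ the two inequalities of Lemma~\ref{lem:zzz} are precisely what is needed to sign $G$ at the kink points $x=0$ and $x=1$; in the range $p>2$ the characterization of $\alpha_p$ from Lemma~\ref{lem:alpha-a} is used repeatedly to absorb the implicit constant $\Cp$ into the critical-point equations.

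The main obstacle I anticipate is the case $p>2$. There $\alpha_p$ is known only implicitly, and $V(\cdot,1)$ changes concavity on both sides of $0$, so the tangent at the negative point $\alpha_p$ must be shown to dominate $V(\cdot,1)$ across an intervening convex region and out onto the positive convex tail $x\gg 1$. Closing this gap will almost certainly require invoking both Lemmas~\ref{lem:alpha-a} and~\ref{lem:alpha-b} at a putative second critical point of $G$ and then using the strict inequality $|\alpha_p|>(p-1)^{1/(p-2)}$ to drive a contradiction.
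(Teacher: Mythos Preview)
Your proposal is a plan with an explicitly unresolved step: for $p>2$ you never actually rule out a second critical point of $G$ with $G\le 0$, and you flag this yourself as ``the main obstacle''. In fact you have the difficulty of the two ranges reversed. For $p>2$ the coefficient $|1-\alpha_p|^{p-2}/(p-1)$ in $U$ is \emph{equal} to $\Cp^p$ by Lemma~\ref{lem:alpha-b}, so $U(x,1)=\Cp^p\bigl((p-1)-px\bigr)$ and $V(x,1)\le U(x,1)$ rearranges to $\Cp^p\bigl(|x|^p-px+p-1\bigr)\ge |x-1|^p$. The bracket is strictly positive for $x\neq 1$, so this is precisely the statement $\Cp^p\ge\sup_{x\neq 1}|x-1|^p/(|x|^p-px+p-1)$, which \emph{is} Lemma~\ref{lem:alpha-a}. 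No critical-point chase is needed. Equivalently --- and this is the observation your tangent-line approach is missing --- $U(\cdot,1)$ is tangent to $V(\cdot,1)$ at \emph{two} points, $x=\alpha_p$ and $x=1$, and since $V(\cdot,1)$ is concave--convex--concave the double tangency forces $V\le U$ immediately via Lemma~\ref{lem:3con-pm}. You only spotted the tangency at $\alpha_p$.

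For $1<p<2$ the coefficient in $U$ is $p^{2-p}/(p-1)\neq\Cp^p$, so no such shortcut exists; here the paper does carry out an analysis close in spirit to yours, but on the ratio $R(x)=|x-1|^p\big/\bigl(|x|^p-p\gp^{p-1}(1-\gp)^{-1}(x-\gp)\bigr)$ rather than on $G$, and it uses both parts of Lemma~\ref{lem:zzz}: the first to ensure the denominator of $R$ stays positive, the second to show that the relevant root $x_3$ of an auxiliary equation satisfies $x_3\ge (p+1)/p$. Your claim that Lemma~\ref{lem:zzz} is ``precisely what is needed to sign $G$ at the kink points $x=0$ and $x=1$'' is not accurate: $G(0)=p^{2-p}-1>0$ requires nothing, and $G(1)\ge 0$ is only the first inequality; your sketch gives no indication where the second inequality would enter, and without it the argument on the region $x>1$ does not close.
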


\begin{proof}[Proof for $2<p<\infty$]
The inequality is satisfied for $y=0$ and $x=y$, so by homogeneity it is enough to consider it for $y=1$ and $x\neq 1$. We can rewrite it as
\begin{equation*}
\Cp^p(|x|^p-px + p-1) \geq |x-1|^p. 
\end{equation*}
For $x\neq1$ the left-hand side is positive (the function $x\mapsto px-p+1$ is tangent to the convex function $x\mapsto |x|^p$ at $x=1$), and therefore we conclude that the assertion is equivalent to
\begin{equation*}
\Cp^p \geq \sup_{x\neq 1}\frac{|x-1|^p}{p(1-x)-1 + |x|^p},
\end{equation*}
which is true by Lemma~\ref{lem:alpha-a}.
\end{proof}

\begin{remark} The above proof stresses the fact that $\Cp$ is chosen exactly so, that the statement is true, but we can also use a slightly different approach.
Again, it is enough to consider $y=1$.
The function $V(\cdot,1)$ is continuously differentiable, its second derivative exists in all but two points, and moreover $V_{xx}(x,1) = 0$ if and only if $|x-1|^{p-2} = \Cp^p|x|^{p-2}$ or equivalently $x = 1/(1\pm \Cp^{p/(p-2)})$. Hence the function $V(\cdot,1)$ is concave on the interval $(-\infty, a]$, convex on the interval $[a,b]$, and again concave on the interval $[b, \infty)$, where $a=1/(1- \Cp^{p/(p-2)})$, $b=1/(1+\Cp^{p/(p-2)})$. Moreover $U(\cdot,1)$ is the tangent to $V(\cdot, 1)$ at the points $\alpha_p$ and $1$. This implies the inequality $V(x,1)\leq U(x,1)$ for $x\in\RR$ (see Lemma~\ref{lem:3con-pm} below).
\end{remark}

We turn to the case $1<p<2$ (for $p=2$ the assertion is trivial). The argument is similar to that above, but slightly more complicated.

\begin{proof}[Proof for $1<p<2$.]
The inequality is satisfied for $y=0$, so by homogeneity it is enough to consider it for $y=1$. We can rewrite it as (recall that $\gp = (p-1)/p$)
\begin{equation*}
\Cp^p\Big(|x|^p-p\frac{\gp^{p-1}}{1-\gp}(x - \gp) \Big) \geq |x-1|^p. 
\end{equation*}
It is easy to see that left-hand side is strictly positive (the global minimum is attained for $x = \gp(1-\gp)^{-1/(p-1)}$, for which the expression in the brackets on the left-hand side is equal to
\begin{equation*}
|\gp|^p (1-\gp)^{-p/(p-1)}\big(1 - p  + p(1-\gp)^{1/(p-1)} \big),
\end{equation*}
which is positive by the first inequality from Lemma~\ref{lem:zzz}).
Therefore we conclude that the assertion is equivalent to the inequality
\begin{equation}\label{ineq:sup}
\Cp^p \geq  \frac{|x-1|^p}{|x|^p-p\frac{\gp^{p-1}}{1-\gp}(x - \gp)}
\end{equation}
holding for every $x\in\RR$. We denote the right-hand side of the above inequality by $R(x)$.
A calculation shows that $R'(x)$ is positive if and only if 
\begin{multline*}
p|x-1|^{p-2}(x-1)\Big(|x|^p-p\frac{\gp^{p-1}}{1-\gp}(x - \gp)\Big) - |x-1|^{p}\Big(p|x|^{p-2}x-p\frac{\gp^{p-1}}{1-\gp}\Big) \\
= p|x-1|^{p-2}(x-1)\big(T(x) - S(x)\big)\geq 0,
\end{multline*}
where we have denoted
\begin{align*}
S(x) &= \frac{\gp^{p-1}}{1-\gp}\big( (p-1)x -p\gp+1 \big) = p\gp^{p-1}\big( (p-1)x -p+2 \big),\\
T(x) &= |x|^{p-2}x.
\end{align*}
The function $T$ is convex on $(-\infty,0)$ and concave on $(0, \infty)$, since $1<p<2$. Moreover, $S(\gp) = T(\gp) $ and
\begin{equation*}
S'(\gp) =  p(p-1)\gp^{p-1} < (p-1)\gp^{p-2} = T'(\gp).
\end{equation*}
We conclude that the equation $S(x)=T(x)$ has three solutions: $x_1<0$, $x_2=\gp$, and $x_3>\gp$. Moreover, $x_3\geq (p+1)/p>1$ since $S((p+1)/p) \leq T((p+1)/p)$ by the second inequality from Lemma~\ref{lem:zzz}.

Therefore, the function $R$ is decreasing on each of the intervals $(-\infty, x_1)$,  $(\gp, 1)$, $(x_3,\infty)$, and  increasing on $(x_1,\gp)$ and $(1,x_3)$.
Since $R(\gp) = (1/\gp-1)^p = \Cp^p$ and $\lim_{x\to-\infty} R(x)=1\leq \Cp^p$, in order to prove \eqref{ineq:sup} it is enough to check that $\Cp^p \geq R(x_3)$. But $S(x_3) = T(x_3) = x_3^{p-1}$, so
\begin{multline*}
|x_3|^p- p\frac{\gp^{p-1}}{1-\gp}(x - \gp) = x_3 S(x_3)-p\frac{\gp^{p-1}}{1-\gp}(x - \gp)= \frac{\gp^{p-1}}{1-\gp} (p-1)(x_3-1)^2
\end{multline*}
and consequently
\begin{equation*}
R(x_3) =  \frac{1-\gp}{\gp^{p-1}} \cdot\frac{(x_3-1)^{p-2}}{(p-1)} = \frac{p^{p-2}}{(p-1)^{p}} (x_3-1)^{p-2}.
\end{equation*}
Hence, $\Cp^p \geq R(x_3)$ is equivalent to $1\leq p(x_3-1)$ (recall that $p-2<0$).
Since we already know that $x_3\geq (p+1)/p$, the proof is finished.
\end{proof}

\subsection{Proof of the martingale inequality}
  \label{sec:proof-mart}

In order to prove the martingale inequality, we just gather the results of the preceding sections.

\begin{proof}[Proof of inequality~\eqref{ineq:mart}]
For $p\in(1,\infty)$, the functions $V$ and $U$ defined in Section~\ref{sec:majorant} satisfy all assumptions of Proposition~\ref{prop:method}. Indeed, the majorization property follows from Proposition~\ref{prop:majorant}. The initial condition is satisfied, since $U(x,x) = -|x|^p|1-\alpha_p|^{p-2}/(p-1)\leq 0$. If $y\leq 0\leq x+h$, then $U(x+h,x+h)\leq 0\leq U(x+h,y)$. If on the other hand $y<x+h<0$ or $0<y<x+h$, then there exists $\xi\in(y,x+h)$ such that
\begin{multline*}
U(x+h,x+h) - U(x+h,y) = U_y(x+h,\xi)(x+h-y) \\
= p|1-\alpha_p|^{p-2}|\xi|^{p-2}(\xi-x-h)(x+h-y)\leq 0.
\end{multline*}
This implies the maximal condition. Finally, $U$ is clearly concave with respect to the first variable. Hence the martingale inequality~\eqref{ineq:mart} holds by Proposition~\ref{prop:method}.
\end{proof}

Moreover, from the abstract argument in Subsection~\ref{subsec:lower-bound} we already know that the constant $\Cp$ is optimal.  Let us however give explicit extremal examples here.

\begin{proof}[Sharpness of~\eqref{ineq:mart}] Fix $1<p<\infty,$ $\alpha \in(-\infty, (p-1)/p)\setminus\{0\}$ and $s\in(0,1)$, and let $\beta=\beta(s)$ be given by the relation $s \alpha + (1-s)\beta=1$. Observe that if $s$ is sufficiently small (depending on $p,\alpha$), then $(1-s)\beta^p>1$ (indeed,
the inequality $(1-s\alpha)^{p} > (1-s)^{p-1}$ can be verified by comparison of derivatives at $s=0$). We consider a martingale $(f_n)_{n=0}^{\infty}$ such that $f_0 = 1$, and such that conditioned on the event $\{(f_n,f_n^*)=(x,x)\}$, one of the following events occurs:
\begin{enumerate}
\item With probability $s$ we have $f_{n+1} = \alpha x$ and the martingale stops, i.e. $f_{n+1}=f_{n+2}=\ldots$; note that in this case $f_{n+1}^* = f_n = x$.
\item With probability $1-s$, we have $f_{n+1} = \beta x$ and the evolution continues according to our rules. In this case $f_{n+1}^*=f_{n+1}$.
\end{enumerate}
Note that $f_n$ takes values in the set $\{\alpha, \alpha\beta, \alpha\beta^2, \ldots, \alpha\beta^{n-1},\beta^n \}$. Moreover, $\PP(f_n=\alpha\beta^k) = s(1-s)^k$ for $k\in\{0,\ldots, n-1\}$, $\PP(f_n=\beta^n) = (1-s)^n$, and
\begin{equation*}
\EE |f_n|^p\indbr{f_n\neq \beta^n} = \sum_{k=0}^{n-1} s(1-s)^k (|\alpha|\beta^k)^p = s|\alpha|^p\frac{1-(1-s)^n\beta^{np}}{1-(1-s)\beta^p}.
\end{equation*}
Also, $f_n-f_n^* = (1-1/\alpha)f_n$ if $f_n\neq \beta^n$, and $f_n-f_n^*=0$ if $f_n = \beta^n$. Hence, the $p$-th power of the constant with which the martingale inequality~\eqref{ineq:mart} holds has to be equal at least
\begin{multline*}
\lim_{s\to 0^+} \lim_{n\to\infty} \frac{\| f_n - f_n^*\|_p^p}{\|f_n\|_p^p} = \lim_{s\to 0^+} \lim_{n\to\infty} \frac{|1-1/\alpha|^p s|\alpha|^p\frac{1-(1-s)^n\beta^{np}}{1-(1-s)\beta^p}}{s|\alpha|^p\frac{1-(1-s)^n\beta^{np}}{1-(1-s)\beta^p} + (1-s)^n \beta^{np}} \\
= \lim_{s\to 0^+} \frac{|\alpha-1|^p s}{s|\alpha|^p + (1-s)\beta^p-1} = \frac{|\alpha-1|^p}{|\alpha|^p -p\alpha + p-1},
\end{multline*}
where we used $(1-s)\beta^p >1$ in the second last equality, and $\beta=(1-s\alpha)/(1-s)$ in the last equality. To obtain the  sharpness of~\eqref{ineq:mart} we take $\alpha\to (p-1)/p ^+$ in the case $1<p\leq 2$ or take $\alpha=\alpha_p$ (see~Lemma~\ref{lem:alpha-a})  in the case $p>2$.
\end{proof}

\begin{remark}
In the above example $f_n$ converges a.s. to a random variable $f_{\infty}$, but $\EE |f_{\infty}|^p = +\infty$. In the case $1<p\leq 2$ one can consider $\alpha > (p-1)/p$ instead of $\alpha< (p-1)/p$ to obtain an example in which $\EE |f_{\infty}|^p<\infty$.
\end{remark}

\subsection{Relation to Theorem~\ref{thm:main} for $m=0$}
  \label{sec:proof-main}

As announced  before, the martingale inequality implies the main result for $m=0$ and $\lambda=1$ in the special case of nonincreasing functions.

\begin{proof}[Proof of Corollary~\ref{cor:main-monotone}]
First note that a standard approximation arguments yields a continuous time version of \eqref{ineq:mart}: for any martingale $(X_t)_{t\geq 0}$ with right-continuous trajectories, we have
\begin{equation*}
\|X_t-\sup_{0\leq s \leq t} X_s\|_p \leq \Cp \|X_t\|_p.
\end{equation*}

Let $f\in\Lpseg$ be a nonincreasing function. On the probability space $[0,1]$, equipped with the Lebesgue measure, consider the filtration
\begin{equation*}
\mathcal{F}_t = \sigma\big([0,1-t], \mathcal{B}([1-t,1])\big), \quad t\in[0,1],
\end{equation*}
(i.e. the $\sigma$-algebra $\mathcal{F}_t$ is generated by the set $[0,1-t]$ and all Borel subsets of the interval $[1-t,1]$)
and the martingale $X_t =\EE(f|\mathcal{F}_t)$, $t\in[0,1]$.
Using the definition of the filtration, we get the explicit formula
\begin{equation*}
 X_t(\omega)=	
  \begin{cases}
    \frac{1}{1-t} \int_0^{1-t} f(s) ds & \text{if }  \omega\in[0, 1-t),\\
    f(\omega) & \text{if } \omega\in[1-t,1].
  \end{cases}
\end{equation*}
In particular, the martingale is right-continuous. Hence,
\begin{align*}
\MoveEqLeft\Cp^p \|f\|_{\Lpseg}^p = \Cp^p\EE |X_1|^p\geq \EE |X_1-\sup_{0\leq t \leq 1} X_t|^p\\
&=\int_0^1 \Big|f(\omega) - \sup_{0\leq t\leq 1}\Big\{\indbr{\omega < 1-t}
\frac{1}{1-t} \int_0^{1-t} f(s) ds + \indbr{\omega \geq 1-t}f(\omega) 
    \Big\} \Big|^p d\omega\\
&= \int_0^1 \big|f(\omega) -  \frac{1}{\omega}\int_0^{\omega} f(s) ds \big|^p d\omega = \|f-\opH f \|_{\Lpseg}^p,
\end{align*}
where the second last equality follows from the fact that $f$ is nonincreasing. A~rescaling argument (see proof of inequality~\eqref{ineq:main} in Section~\ref{sec:lambda-m}) yields $\Cp \|f\|_{\Lp}\geq \|f-\opH f \|_{\Lp}$ for nonincreasing functions $f\in\Lp$.
\end{proof}

Let us now briefly sketch of the proof of the main result for $m=0$, $\lambda=1$, and not necessarily nonincreasing functions.
The key idea is to use the tools from the proof of the martingale inequality proof, i.e. the special functions $V$ and $U$ from Section~\ref{sec:majorant}, rather than to apply it directly. 
The following lemma is needed (its proof is based on integration by parts and we shall prove a more general result later, cf.~Lemma~\ref{lem:by-parts-pm}).

\begin{lemma}\label{lem:by-parts}
If $1<p<\infty$ and $f:[0,1]\to\RR$ is continuous, then
\begin{equation*}
(p-1)\int_0^1 |\opH f(t)|^p dt \leq p\int_0^1 |\opH f(t)|^{p-2}\opH f(t) f(t) dt.
\end{equation*}
\end{lemma}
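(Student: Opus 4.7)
The plan is to reduce the inequality to an integration-by-parts identity by introducing $g(t) = \opH f(t)$. Since $f$ is continuous on $[0,1]$, the antiderivative $F(t)=\int_0^t f(s)\,ds$ is $C^1$, and writing $F(t) = tg(t)$ gives $f(t) = g(t) + tg'(t)$ on $(0,1]$, with $g$ bounded near $0$ (in fact $g(t)\to f(0)$ as $t\to 0^+$). Substituting this expression for $f$ into the right-hand side yields
\begin{equation*}
p\int_0^1 |g(t)|^{p-2}g(t)f(t)\,dt = p\int_0^1 |g(t)|^p\,dt + p\int_0^1 |g(t)|^{p-2}g(t)\cdot tg'(t)\,dt,
\end{equation*}
so the task reduces to evaluating the second integral.

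The key observation is that $p|g|^{p-2}gg'$ is exactly $\tfrac{d}{dt}|g|^p$ (this identity holds classically for $p\geq2$, and for $1<p<2$ it still holds in the sense that $|g|^p$ is $C^1$ with that derivative, interpreted as $0$ when $g=0$, since $p-1>0$). Hence the second integral equals $\int_0^1 t\,\tfrac{d}{dt}|g(t)|^p\,dt$, which by a single integration by parts on the compact interval $[0,1]$ becomes
\begin{equation*}
\bigl[t|g(t)|^p\bigr]_0^1 - \int_0^1 |g(t)|^p\,dt = |g(1)|^p - \int_0^1|g(t)|^p\,dt,
\end{equation*}
where the boundary term at $0$ vanishes because $t|g(t)|^p \to 0$ as $t\to 0^+$ (the function $g$ is bounded near $0$).

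Combining the two displays produces the clean identity
\begin{equation*}
p\int_0^1 |g|^{p-2}g\,f\,dt = (p-1)\int_0^1 |g|^p\,dt + |g(1)|^p,
\end{equation*}
and dropping the nonnegative boundary term $|g(1)|^p$ gives the claim. The main point to be careful about is the regularity at the endpoints: at $0$, that $tg(t)^p\to0$, and justifying the differentiation formula for $|g|^p$ uniformly for all $p>1$ (the case $1<p<2$ being where $|x|^p$ fails to be $C^2$ at the origin, though still $C^1$, which is all that is needed here). Once these mild analytic points are dispatched, the proof reduces to the single integration by parts above.
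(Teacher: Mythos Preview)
Your proof is correct and essentially the same as the paper's. The paper defers the argument to the more general Lemma~\ref{lem:by-parts-pm} (the case $m=0$ gives the present statement), where one writes $F(t)=\int_0^t f$ and integrates $(p-1)\int_0^1 t^{-p}|F(t)|^p\,dt$ by parts to obtain the identity $(p-1)\int_0^1 |\opH f|^p\,dt = -|F(1)|^p + p\int_0^1 |\opH f|^{p-2}(\opH f)f\,dt$; your substitution $g=\opH f=F/t$, $f=g+tg'$, followed by integrating $\int_0^1 t\,(|g|^p)'\,dt$ by parts, is the same computation in different variables and yields the identical identity with boundary term $|g(1)|^p=|F(1)|^p$.
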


Instead of looking at the values of $V$, $U$ in the points $(f_n, \sup_{k\leq n} f_n)$, where $(f_n)_{n=0}^{\infty}$ is a martingale, we consider their values in the points $(f(t),\opHm[0]f(t))$, where $f:[0,1]\to\RR$ is a continuous function.
 We use Proposition~\ref{prop:majorant} with $x=f(t)$ and $y=\opH f(t)$, integrate over $t\in[0,1]$, and apply Lemma~\ref{lem:by-parts}, arriving at
\begin{multline*}
\int_0^1 |f(t)-\opH f(t)|^p dt - \Cp^p \int_0^1 |f(t)|^p dt\\
\leq -\frac{|1-\alpha_p|^{p-2}}{p-1} \int_0^1 |\opH f(t)|^{p-2} \opH f(t)\big( p f(t) - (p-1) \opH f(t)\big) dt \leq 0.
\end{multline*}
Standard approximation and a simple scaling argument gives 
$\|f-\opH f\|_{\Lp} \leq \Cp \| f\|_{\Lp}$. As for sharpness, it is enough to consider the functions
\begin{equation*}
f_{\alpha}(t) = \indbr{t\in[0,1)}+ \alpha t^{\alpha-1} \indbr{t\in[1,\infty)}, \quad \alpha<(p-1)/p.
\end{equation*}

It turns out that this approach can be adapted to work in a more general setting. This is done with all details in the next section.

\section{Proof of Theorem~\ref{thm:main}}
  \label{sec:lambda-m}

\subsection{Notation, preliminary results}

For $1<p<\infty$ and $m>-2(p-1)/p$, we denote
\begin{equation*}
\gpm = \frac{m}{2} + \frac{p-1}{p}.
\end{equation*}
Clearly, $\gpm$ is a positive number. Moreover, we have the following slight extension of~\cite[Proposition~5.1]{MR2595549}.

\begin{proposition}
\label{prop:boundness}
For $1<p<\infty$ and $m>-2(p-1)/p$, the formula
\begin{equation*}
\opHm f(t) = \frac{1}{t^{1+m/2}} \int_0^t f(s) s^{m/2} ds
\end{equation*}
defines a bounded operator on the space $\Lp$.
Moreover,
\begin{equation*}
\opnorm{\opHm}{\Lp} = \gpm^{-1}.
\end{equation*}
\end{proposition}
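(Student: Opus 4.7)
The plan is to establish the upper bound via a direct application of Minkowski's integral inequality after a scaling substitution, and to obtain the lower bound by evaluating the ratio $\|\opHm f_\eps\|_p / \|f_\eps\|_p$ on the extremal family mentioned just before the statement.

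\textbf{Step 1: rewriting the operator in scale-invariant form.} Substituting $s=tu$ in the definition of $\opHm$ I would first observe that
\begin{equation*}
\opHm f(t) = \frac{1}{t^{1+m/2}}\int_0^t f(s) s^{m/2}\,ds = \int_0^1 f(tu)\, u^{m/2}\,du.
\end{equation*}
This representation has two virtues: it turns $\opHm$ into a superposition of the bounded dilation operators $f\mapsto f(\cdot\,u)$, and it makes the scaling behavior in $u$ transparent.

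\textbf{Step 2: upper bound via Minkowski's integral inequality.} Since $f\in\Lp$ implies $\|f(\cdot\,u)\|_p = u^{-1/p}\|f\|_p$ for $u>0$, Minkowski's integral inequality yields
\begin{equation*}
\|\opHm f\|_p \leq \int_0^1 u^{m/2}\,\|f(\cdot\,u)\|_p\,du = \|f\|_p \int_0^1 u^{m/2 - 1/p}\,du.
\end{equation*}
The last integral is finite precisely when $m/2 - 1/p > -1$, i.e. $m>-2(p-1)/p$, and in that case equals $(m/2+(p-1)/p)^{-1} = \gpm^{-1}$. This proves both boundedness and the estimate $\opnorm{\opHm}{\Lp}\leq \gpm^{-1}$.

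\textbf{Step 3: matching lower bound via the extremal family.} For $\eps>0$ small, put $f_\eps(t) = t^{-1/p+\eps}\indbr{t\in(0,1)}$, so that $\|f_\eps\|_p^p = (\eps p)^{-1}$. A direct computation of the integral defining $\opHm f_\eps$ splits into the two regimes $t\in(0,1)$ and $t\geq 1$: on the first, $\opHm f_\eps(t) = (\gpm+\eps)^{-1} t^{-1/p+\eps}$, and on the second, $\opHm f_\eps(t) = (\gpm+\eps)^{-1} t^{-1-m/2}$, the latter being $\Lp$-integrable precisely because $p(1+m/2) - 1 = p\gpm > 0$. Evaluating both $L^p$-integrals gives
\begin{equation*}
\frac{\|\opHm f_\eps\|_p^p}{\|f_\eps\|_p^p} = \frac{1 + \eps/\gpm}{(\gpm+\eps)^p} \xrightarrow[\eps\to 0^+]{} \gpm^{-p},
\end{equation*}
which forces $\opnorm{\opHm}{\Lp}\geq \gpm^{-1}$ and completes the proof.

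\textbf{Expected obstacle.} There is no real obstacle here: the whole argument is a one-line Minkowski estimate plus a one-line example. The only point requiring mild care is the book-keeping in Step~3, where one must verify the tail integrability $\int_1^\infty t^{-p(1+m/2)}\,dt<\infty$ — but this is exactly the positivity of $p\gpm$, which is the same condition $m>-2(p-1)/p$ that made the Minkowski integral converge, so both bounds degenerate at the same threshold and the two estimates match.
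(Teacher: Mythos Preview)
Your proof is correct and follows essentially the same route as the paper: the substitution $s=tu$ followed by Minkowski's integral inequality for the upper bound, and the family $f_\eps(t)=t^{-1/p+\eps}\indbr{t\in(0,1)}$ for sharpness. The only difference is that you carry out the Step~3 computation in full, whereas the paper merely states that this family extremizes the norm as $\eps\to0^+$; your explicit calculation is a welcome addition.
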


\begin{proof} If $f\in\Lp$ is bounded, then the function $\opHm f$ is well defined (since $m/2> -1$), and the Minkowski integral inequality yields
\begin{align*}
\| \opHm f \|_p
&= \Big( \int_0^\infty \Big|\frac{1}{t^{1+m/2}} \int_0^t f(s) s^{m/2}ds\Big|^p dt \Big)^{1/p}\\
&= \Big( \int_0^\infty \Big|\int_0^1 f(ut) u^{m/2}du \Big|^p dt \Big)^{1/p}\\
&\leq  \int_0^1 \Big(\int_0^\infty |f(ut)|^p  dt \Big)^{1/p} u^{m/2}du
=\int_0^1 u^{m/2-1/p}\|f\|_p du  = \gpm^{-1}\|f\|_p.
\end{align*}
A standard density argument implies the claim. The family $t\mapsto t^{-1/p+\eps}\indbr{t\in[0,1]}$ extremizes the norm as $\eps\to0^+$.
\end{proof}

We define the set $\setpm$, the function $\cmpl:\setpm\to\RR$, and the constant $\Cpml$ by the formulas:
\begin{gather*}
\setpm = \{(\alpha,\beta)\in\RR^2 : \alpha < \gpm < \beta \},\\
\cmpl(\alpha,\beta) =\Big( \frac{(\beta-\gpm)|\alpha-\lambda|^p + (\gpm-\alpha)|\beta-\lambda|^p}{(\beta-\gpm)|\alpha|^p + (\gpm-\alpha)|\beta|^p}\Big)^{1/p},\quad (\alpha,\beta)\in\setpm,\\
\Cpml = \sup\{ \cmpl(\alpha,\beta) : \  (\alpha,\beta)\in \setpm \}.
\end{gather*}
Recall that our aim is to show that $\Cpml=\opnorm{\opId-\lambda\opHm}{\Lp}$. 
The first lemma shows that $\Cpml$ is a lower bound for the 
 norm of the operator $\opId - \lambda \opHm$. It also serves as a proof that $\Cpml$ is finite.

\begin{lemma}\label{lem:sharp}
If $1<p<\infty$, $m>-2(p-1)/p$, $\lambda\in\RR$, then
\begin{equation*}
\opnorm{\opId - \lambda \opHm}{\Lp}\geq \Cpml.
\end{equation*}
\end{lemma}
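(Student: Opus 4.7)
The plan is to exhibit, for each admissible pair $(\alpha,\beta)\in\setpm$, an explicit test function $f\in\Lp$ for which the quotient $\|f-\lambda\opHm f\|_p/\|f\|_p$ equals $\cmpl(\alpha,\beta)$ exactly. Taking the supremum over $(\alpha,\beta)$ will then give $\opnorm{\opId-\lambda\opHm}{\Lp}\geq \Cpml$; combined with the boundedness of $\opHm$ from Proposition~\ref{prop:boundness} and the triangle inequality, this also establishes that $\Cpml$ is finite.

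The construction generalizes the extremizing family $f_\alpha(t)=\indbr{t\in[0,1)}+\alpha t^{\alpha-1}\indbr{t\in[1,\infty)}$ used in the proof of Corollary~\ref{cor:main-monotone}. Writing $\mu:=1+m/2$ (so that $\gpm=\mu-1/p$), I would take
\begin{equation*}
f(t) \;=\; \beta\,t^{\beta-\mu}\indbr{t\in(0,1)} \;+\; \alpha\,t^{\alpha-\mu}\indbr{t\in(1,\infty)}.
\end{equation*}
The admissibility conditions $\alpha<\gpm<\beta$ translate to $\beta-\mu>-1/p$ and $\alpha-\mu<-1/p$, which are precisely what is needed for $f\in\Lp$.

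The key algebraic point is that the exponents are tuned so that the boundary contributions in the Hardy-type integral on $(1,\infty)$ telescope away. A direct computation (using $\int_0^1 \beta s^{\beta-1}\,ds=1$) yields
\begin{equation*}
\opHm f(t) \;=\; t^{\beta-\mu}\indbr{t\in(0,1)} \;+\; t^{\alpha-\mu}\indbr{t\in(1,\infty)},
\end{equation*}
so that $f(t)-\lambda\opHm f(t)=(\beta-\lambda)t^{\beta-\mu}$ on $(0,1)$ and $(\alpha-\lambda)t^{\alpha-\mu}$ on $(1,\infty)$. Integrating, with the identities $p(\beta-\mu)+1=p(\beta-\gpm)$ and $-p(\alpha-\mu)-1=p(\gpm-\alpha)$, I would obtain
\begin{equation*}
\|f\|_p^p = \frac{|\beta|^p}{p(\beta-\gpm)}+\frac{|\alpha|^p}{p(\gpm-\alpha)}, \qquad \|f-\lambda\opHm f\|_p^p = \frac{|\beta-\lambda|^p}{p(\beta-\gpm)}+\frac{|\alpha-\lambda|^p}{p(\gpm-\alpha)},
\end{equation*}
and clearing the common factor $[p(\beta-\gpm)(\gpm-\alpha)]^{-1}$ in the ratio recovers $\cmpl(\alpha,\beta)^p$ exactly.

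I do not anticipate any substantial obstacle. The only routine points to verify are the convergence of the integrals (automatic from $\alpha<\gpm<\beta$), the cancellation in the Hardy operator on $(1,\infty)$, and the degenerate case $\alpha=0$ (where $f$ is supported on $(0,1)$ and the formula $\opHm f(t)=t^{\alpha-\mu}$ on $(1,\infty)$ still reduces correctly to $t^{-\mu}$). No passage to a limit is needed: each fixed $(\alpha,\beta)\in\setpm$ already produces the ratio $\cmpl(\alpha,\beta)$, so taking the supremum gives the lemma.
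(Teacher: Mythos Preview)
Your proposal is correct and is essentially identical to the paper's own proof: with $\mu=1+m/2$ one has $\beta-\mu=\beta-\gpm-1/p$, so your test function is exactly the paper's $f_{\alpha,\beta}$, and the computations of $\opHm f_{\alpha,\beta}$ and the two $L^p$-norms match verbatim. The only cosmetic difference is your explicit mention of the telescoping on $(1,\infty)$ and the $\alpha=0$ check, which the paper leaves implicit.
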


\begin{proof}
Fix $1<p<\infty$ and $m>-2(p-1)/p$.
For $\alpha<\gpm<\beta$, consider the function
\begin{equation}\label{eq:f-ab}
f_{\alpha,\beta}(t) = \beta t^{\beta-\gpm - 1/p} \indbr{t\in[0,1)}+\alpha t^{\alpha-\gpm - 1/p}\indbr{t\in[1,\infty)}, 
\end{equation}
which clearly belongs to the space $\Lp$. We have
\begin{equation*}
\opHm f_{\alpha,\beta}(t) = t^{\beta-\gpm - 1/p} \indbr{t\in[0,1)}+ t^{\alpha-\gpm - 1/p}\indbr{t\in[1,\infty)}
\end{equation*}
and
\begin{align*}
\|f_{\alpha,\beta} - \lambda\opHm f_{\alpha,\beta} \|_p^p &= \frac{|\beta-\lambda|^p}{p(\beta-\gpm)} -\frac{|\alpha-\lambda|^p}{p(\alpha-\gpm)},\\
\| f_{\alpha,\beta} \|_p^p &=  \frac{|\beta|^p}{p(\beta-\gpm)} -\frac{|\alpha|^p}{p(\alpha-\gpm)}.
\end{align*}
Thus we see that $\opnorm{\opId - \lambda \opHm}{\Lp}\geq \Cpml$.
\end{proof}

The next lemma summarizes further observations about the constant $\Cpml$.

\begin{lemma}\label{lem:Cpm}
If $1<p<\infty$, $m>-2(p-1)/p$, and $\lambda\in\RR$, then
\begin{equation*}
\Cpml \geq \max\{ |1-\lambda\gpm^{-1}|, 1\}.
\end{equation*}
Also, if the above inequality is strict, then the supremum in the definition of $\Cpml$ is attained at some point of the set $\setpm$. Moreover, $\Cpml > 1$ unless $\lambda=0$ or $p=2$.
\end{lemma}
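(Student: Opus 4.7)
The plan is organized around the algebraic identity
\begin{equation*}
\cmpl(\alpha,\beta)^p - 1 = \frac{(\beta-\gpm)\phi(\alpha) + (\gpm-\alpha)\phi(\beta)}{(\beta-\gpm)|\alpha|^p + (\gpm-\alpha)|\beta|^p}, \qquad \phi(t) := |t-\lambda|^p - |t|^p,
\end{equation*}
obtained by subtracting the denominator from the numerator in the definition of $\cmpl^p$. Since the denominator is strictly positive on $\setpm$ and the numerator, divided by $\beta-\alpha>0$, is the value at $\gpm$ of the secant to the graph of $\phi$ joining $\alpha$ and $\beta$, the inequality $\cmpl(\alpha,\beta) > 1$ is equivalent to that secant being strictly positive at $\gpm$. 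I would first deduce the two lower bounds by evaluating $\cmpl$ along explicit sequences: choosing $\alpha = \gpm - \eps$ and $\beta = \gpm + \eps$ and sending $\eps\to 0^+$ makes both numerator and denominator of $\cmpl^p$ proportional to $\eps$, with ratio tending to $|\gpm - \lambda|^p/\gpm^p$, proving $\Cpml \geq |1-\lambda\gpm^{-1}|$; fixing any $\alpha < \gpm$ and letting $\beta \to +\infty$ gives $\cmpl^p \to 1$ because both numerator and denominator are asymptotic to $(\gpm - \alpha)|\beta|^p$, so $\Cpml \geq 1$.

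For the attainment statement I would use that $\cmpl$ is continuous on the open set $\setpm$ and bounded by $1 + |\lambda|\gpm^{-1}$ (combining Lemma~\ref{lem:sharp}, Proposition~\ref{prop:boundness}, and the triangle inequality). A routine leading-order analysis along any escape route from compact subsets of $\setpm$, i.e.\ as $\alpha\to\gpm^-$, $\beta\to\gpm^+$, $\alpha\to-\infty$, or $\beta\to+\infty$ (possibly several at once), shows that the $\limsup$ of $\cmpl$ is at most $M := \max\{|1-\lambda\gpm^{-1}|,1\}$. Thus, under the strict hypothesis $\Cpml > M$, every maximizing sequence remains in some compact subset of $\setpm$, and the supremum is attained by continuity and compactness.

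For the strict bound $\Cpml > 1$, the case $\phi(\gpm) > 0$ (equivalently $|1-\lambda\gpm^{-1}|>1$) already follows from the lower bound. Otherwise $0\leq\lambda\leq 2\gpm$, and the two distinguished cases drop out at once: when $\lambda=0$ the function $\phi$ is identically zero, while when $p=2$ the function $\phi$ is affine and the secant at $\gpm$ equals $\phi(\gpm) = \lambda(\lambda-2\gpm) \leq 0$ for every $(\alpha,\beta)\in\setpm$. In the remaining regime $0<\lambda\leq 2\gpm$ with $p\neq 2$ I would exploit $\phi(0) = \lambda^p > 0$ together with the expansions $\phi(t) = -p\lambda t^{p-1}+O(t^{p-2})$ as $t\to+\infty$ and $\phi(t) = p\lambda|t|^{p-1}+O(|t|^{p-2})$ as $t\to-\infty$: if $1<p<2$, take $\alpha=0$ and let $\beta\to+\infty$, so the secant at $\gpm$ equals $(1-\gpm/\beta)\lambda^p + (\gpm/\beta)\phi(\beta)$ and tends to $\lambda^p>0$ because $\beta^{p-2}\to 0$; if $p>2$, fix $\beta>\gpm$ and let $\alpha\to-\infty$, so the contribution $(\beta-\gpm)\phi(\alpha)/(\beta-\alpha)\sim p\lambda(\beta-\gpm)|\alpha|^{p-2}$ diverges to $+\infty$ while the other summand stays bounded. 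The hard part is exactly this sandwich regime $0<\lambda\leq 2\gpm$, where $\phi(\gpm)\leq 0$ rules out any perturbative argument close to $\gpm$ and forces one to go far into $\setpm$; the correct direction flips across $p=2$, which is precisely why $p=2$ must appear as an exceptional case.
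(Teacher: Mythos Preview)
Your argument is correct and structurally parallel to the paper's. For the lower bound and the attainment claim the paper computes the same boundary limits, organizing them via the convex-combination identity $\cmpl^p(\alpha,\beta) = w_1|1-\lambda/\alpha|^p + w_2|1-\lambda/\beta|^p$ rather than your direct leading-order expansions; the case analysis and the conclusions are identical.

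For the strict inequality $\Cpml>1$ the paper also splits on the sign of $p-2$ and pushes $\alpha$ very negative when $p>2$, $\beta$ very large when $p<2$: it exhibits a finite witness $(-A,\gpm+B)$ with $A^{p-1}B>(\gpm+A)(B+\gpm)^{p-1}$ and checks $\cmpl(-A,\gpm+B)>1$ via the two one-line bounds $(A+\lambda)^p-A^p\geq p\lambda A^{p-1}$ and $(\gpm+B)^p-(\gpm+B-\lambda)^p\leq p\lambda(\gpm+B)^{p-1}$. Your framing through $\phi(t)=|t-\lambda|^p-|t|^p$ and the secant at $\gpm$ is a tidy repackaging of exactly this computation---the paper's displayed inequality $((A+\lambda)^p-A^p)B>(\gpm+A)((\gpm+B)^p-(\gpm+B-\lambda)^p)$ \emph{is} the positivity of your secant---with the advantage that the asymptotics $\phi(t)\sim p\lambda|t|^{p-2}(-t)$ make transparent why the correct escape direction flips at $p=2$. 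The only genuine deviation is that for $1<p<2$ you take $\alpha=0$ rather than $\alpha=-A$ and pass to a limit rather than naming a finite point; both work equally well.
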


\begin{proof}
Throughout the proof we consider only $(\alpha,\beta)\in\setpm$.
For $\alpha\neq0$, we can write the function $\cmpl^p$ as a convex combination:
\begin{equation}\label{eq:conv-comb}
\cmpl^p(\alpha,\beta) = w_1(\alpha,\beta)\cdot|1-\lambda/\alpha|^p + w_2(\alpha,\beta)\cdot |1-\lambda/\beta|^p,
\end{equation}
where
\begin{align*}
w_1(\alpha,\beta) = \frac{(\beta-\gpm)|\alpha|^p}{(\beta-\gpm)|\alpha|^p + (\gpm-\alpha)|\beta|^p}, \\
w_2(\alpha,\beta) = \frac{ (\gpm-\alpha)|\beta|^p}{(\beta-\gpm)|\alpha|^p + (\gpm-\alpha)|\beta|^p}.
\end{align*}
Using~\eqref{eq:conv-comb} we see that
\begin{equation*}
\lim_{\alpha,\beta} \cmpl^p(\alpha,\beta) =
   \begin{cases}
   1 & \text{if } \alpha\to-\infty \text{ and } \beta\to\infty,\\
   |1-\lambda\gpm^{-1}|^p & \text{if } \alpha\to\gpm^- \text{ and }  \beta\to\gpm^+,
   \end{cases}
\end{equation*}
which implies the first part of the assertion.

We now claim, that if $(\alpha,\beta)\to\partial\setpm$ or $\alpha^2+\beta^2\to\infty$, then
\begin{equation}\label{eq:boundary}
\limsup_{\alpha,\beta} \cmpl^p(\alpha,\beta) \leq \max\{1,|1-\lambda\gpm^{-1}|^p \}.
\end{equation}
It follows from~\eqref{eq:conv-comb} that~\eqref{eq:boundary} holds if $\alpha\to-\infty$ and $\beta\to\gpm^+$, or $\alpha\to\gpm^-$ and $\beta\to\infty$. If on the other hand $\alpha\to-\infty$ or $\alpha\to\gpm^-$, and $\beta\to \beta_{\infty}\in(\gpm,\infty)$, then $w_2(\alpha,\beta)\to 0$, and consequently
\begin{equation*}
\lim_{\alpha,\beta} \cmpl^p(\alpha,\beta) \in\{ 1, |1-\lambda\gpm^{-1}|\}.
\end{equation*}
Similarly,~\eqref{eq:boundary} also holds if $\alpha\to \alpha_{\infty}\in(-\infty, \gpm)\setminus\{0\}$ and $\beta\to \gpm^+$ or $\beta\to\infty$ (because then $w_1(\alpha,\beta)\to 0$). Finally, if $\alpha\to 0$ and $\beta\to \gpm^+$ or $\beta\to\infty$, then
\begin{multline*}
\limsup_{\alpha,\beta} \cmpl^p(\alpha,\beta) \leq \lim_{\alpha,\beta} \frac{(\beta-\gpm)|\alpha-\lambda|^p + (\gpm-\alpha)|\beta-\lambda|^p}{ (\gpm-\alpha)|\beta|^p}\\
=\max\{1, |1-\lambda\gpm^{-1}|\}.
\end{multline*}
These observations imply that if the supremum in the definition of $\Cpml$ is strictly greater than $\max\{1,|1-\lambda\gpm^{-1}| \}$, then it is attained at some point of the set $\setpm$.

The last part of the assertion clearly holds if $\lambda<0$. Assume henceforth that $\lambda>0$ and $p\neq 2$. Choose $A>0$ and $B >\max\{ \lambda- \gpm,0\}$ so that the inequality
\begin{equation*}
A^{p-1} B > (\gpm + A)(B+\gpm)^{p-1}
\end{equation*}
is satisfied (i.e. pick A sufficiently large if $p>2$ or $B$ sufficiently large if $1<p<2$). Since
\begin{gather*}
(A+\lambda)^p - A^p  \geq p\lambda A^{p-1},\\
 p\lambda(B+\gpm)^{p-1} \geq(\gpm+B)^p - (\gpm+B-\lambda)^p,
\end{gather*}
such a choice of $A,B$ implies that
\begin{equation*}
((A+\lambda)^p - A^p ) B > (\gpm+A) ((\gpm+B)^p - (\gpm+B-\lambda)^p),
\end{equation*}
which is equivalent to $\cmpl(-A, \gpm+B) >1$.
\end{proof}

\begin{remark}\label{rem:inv}
We can see that $\opnorm{\opId - \lambda \opHm}{\Lp}\geq 1$ in another way. Indeed, for $f_n(t) = \indbr{t\in[n,n+1)}$ have $\|f_n\|_p=1$, but
\begin{align*}
\MoveEqLeft[1]
\| \opHm f_n\|_p^p \\
&=\int_0^{\infty} \Big| \frac{t^{1+m/2} - n^{1+m/2}}{(1+m/2)t^{1+m/2}}\indbr{t\in[n,n+1)} +  \frac{(n+1)^{1+m/2} - n^{1+m/2}}{(1+m/2)t^{1+m/2}}\indbr{t\in[n+1,\infty)}\Big|^p dt\\
&\leq \int_n^{\infty} \Big(\frac{(n+1)^{1+m/2} - n^{1+m/2}}{(1+m/2)t^{1+m/2}}\Big)^p dt\\
&= \frac{((n+1)^{1+m/2} - n^{1+m/2})^p}{(1+m/2)^p (p+pm/2 -1) n^{p+pm/2 -1}} \xrightarrow[n\to\infty]{} 0.
\end{align*}
Hence the operator $\lambda\opHm:\Lp\to\Lp$ is not invertible, and consequently we cannot have $\opnorm{\opId - \lambda \opHm}{\Lp}< 1$.
\end{remark}

\subsection{Key tools}

The first result generalizes Lemma~\ref{lem:by-parts} presented above without proof.

\begin{lemma}\label{lem:by-parts-pm}
If $1<p<\infty$,  $m>-2(p-1)/p$, and $f:[0,1]\to\RR$ is continuous, then
\begin{equation*}
(p(1+m/2)-1)\int_0^1 |\opHm f(t)|^p dt \leq p\int_0^1 |\opHm f(t)|^{p-2}\opHm f(t) f(t) dt.
\end{equation*}
\end{lemma}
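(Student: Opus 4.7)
The plan is to reduce the inequality to a direct integration-by-parts identity after first rewriting $\opHm f$ in a form that makes the differential relationship between $f$ and $\opHm f$ explicit. Set $g(t) = \opHm f(t)$, so that $t^{1+m/2} g(t) = \int_0^t f(s) s^{m/2}\, ds$. Differentiating both sides (continuity of $f$ makes the right-hand side $C^1$ on $(0,1]$) gives the pointwise identity
\begin{equation*}
f(t) = (1 + m/2)\, g(t) + t\, g'(t), \quad t \in (0,1].
\end{equation*}

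Next, I would multiply this identity by $p |g(t)|^{p-2} g(t)$ and integrate over $[0,1]$, using that $p|g|^{p-2} g \cdot g' = \tfrac{d}{dt}|g|^p$. This yields
\begin{equation*}
p\int_0^1 |g(t)|^{p-2} g(t) f(t)\, dt = p(1+m/2) \int_0^1 |g(t)|^p\, dt + \int_0^1 t\, \tfrac{d}{dt}|g(t)|^p\, dt.
\end{equation*}
An integration by parts on the last term gives $\int_0^1 t\, \tfrac{d}{dt}|g(t)|^p\, dt = |g(1)|^p - \int_0^1 |g(t)|^p\, dt$, provided the boundary term at $0$ vanishes. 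This is where I would note that since $f$ is continuous on $[0,1]$, hence bounded, the bound $|g(t)| \leq \|f\|_\infty/(1+m/2)$ (valid since $1+m/2 > 0$) makes $t|g(t)|^p \to 0$ as $t \to 0^+$. Combining the identities gives
\begin{equation*}
p\int_0^1 |g(t)|^{p-2} g(t) f(t)\, dt = (p(1+m/2)-1)\int_0^1 |g(t)|^p\, dt + |g(1)|^p,
\end{equation*}
and the desired inequality follows immediately by discarding the nonnegative boundary term $|g(1)|^p$.

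The only technical subtlety — and therefore the main obstacle — is the step $p|g|^{p-2} g \cdot g' = \tfrac{d}{dt}|g|^p$ in the case $1 < p < 2$ at points where $g(t) = 0$, since then $|g|^{p-2}$ blows up. I would handle this by regularization: replace $|g|^p$ by $(g^2 + \varepsilon)^{p/2}$, which is genuinely $C^1$, carry out the identical integration by parts, and let $\varepsilon \to 0^+$ via dominated convergence, using that $|g|^{p-2}|g| = |g|^{p-1}$ is integrable against the bounded function $|t g'| \leq |f| + (1+m/2)|g|$. Everything else is a bookkeeping exercise, and the structure of the proof specializes correctly to Lemma~\ref{lem:by-parts} upon setting $m=0$.
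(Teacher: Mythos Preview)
Your proof is correct and is essentially the same integration-by-parts argument as the paper's: the paper writes $F(t)=\int_0^t f(s)s^{m/2}\,ds$ and integrates $t^{-p(1+m/2)}|F(t)|^p$ by parts to obtain the identical identity $p\int_0^1 |\opHm f|^{p-2}\opHm f\cdot f\,dt = (p(1+m/2)-1)\int_0^1 |\opHm f|^p\,dt + |F(1)|^p$, which is exactly your final display since $g(1)=F(1)$. If anything, you are more careful than the paper about the differentiability of $|g|^p$ when $1<p<2$, which the paper's proof passes over silently.
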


\begin{proof}
Define $F(t) = \int_0^t f(s)s^{m/2}ds$. Since $f$ is continuous, we have $F'(t) = f(t)t^{m/2}$ (in particular, $\opHm f(t) = F(t)/t^{1+m/2}\to f(0)/(1+m/2)$ as $t\to 0^+$). Hence integration by parts yields
\begin{align*}
\MoveEqLeft[1]
(p(1+m/2)-1)\int_0^1 |\opHm f(t)|^p dt = (p(1+m/2)-1)\int_0^1 t^{-p(1+m/2)}|F(t)|^p dt\\
&= \big[-t^{-p(1+m/2)+1} |F(t)|^p\big]_{0}^1 + p\int_0^1 t^{-p(1+m/2)+1}|F(t)|^{p-2}F(t)f(t)t^{m/2} dt\\
&= - |F(1)|^p + \lim_{t\to 0^+} t |F(t)/t^{1+m/2}|^p
+ p\int_0^1 |\opHm f(t)|^{p-2} \opHm f(t) f(t) dt\\
&= - |F(1)|^p + p\int_0^1 |\opHm f(t)|^{p-2} \opHm f(t) f(t) dt.
\end{align*}
This implies the assertion of the lemma.
\end{proof}

Moreover, the following elementary lemma is useful for us.

\begin{lemma}\label{lem:3con-pm}
Suppose that $v:\RR\to\RR$ is continuously differentiable and strictly concave on $(-\infty,a)$, strictly convex on $(a,b)$, and strictly concave on $(b,\infty)$ for some  $a,b\in\RR$. Let $u:\RR\to\RR$ be an affine function tangent to $v$ at two points. Then $v(x)\leq u(x)$ for $x\in\RR$.
\end{lemma}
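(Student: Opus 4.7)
My plan is to introduce $w := u - v$ and exploit its curvature to pin down the geometry of the two tangencies. Since $u$ is affine, $w'' = -v''$, so $w$ is continuously differentiable, strictly convex on $(-\infty, a)$ and $(b, \infty)$, and strictly concave on $(a, b)$. The tangency hypothesis at points $x_1 < x_2$ translates to $w(x_i) = w'(x_i) = 0$ for $i = 1, 2$, and the goal is to show $w \geq 0$ on all of $\RR$.

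The key and most delicate step is to show that $x_1 < a$ and $x_2 > b$. Since $u$ has a single slope and is tangent to $v$ at both $x_i$, one has $v'(x_1) = v'(x_2)$, and the Mean Value Theorem applied to $v$ on $[x_1, x_2]$ produces $c \in (x_1, x_2)$ with $v'(c) = v'(x_1) = v'(x_2)$. Combined with the strict monotonicity of $v'$ on each of the three open subintervals of $\RR \setminus \{a, b\}$, this excludes every configuration except $x_1 < a$ and $x_2 > b$. For instance, if $x_1 \in (-\infty, a)$ and $x_2 \in (a, b]$, then every $c \in (x_1, a]$ satisfies $v'(c) < v'(x_1)$ by strict concavity of $v$ on $(-\infty, a)$, while every $c \in (a, x_2)$ satisfies $v'(c) < v'(x_2)$ by strict convexity of $v$ on $(a, b)$; no $c$ can realise the MVT identity. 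The remaining cases --- both tangencies in one open subinterval, or the boundary possibilities $x_i \in \{a, b\}$, including the degenerate $x_1 = a$, $x_2 = b$ --- are dispatched by the same monotonicity-plus-MVT argument.

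With $x_1 < a < b < x_2$ established, the conclusion follows from a routine three-interval sign chase. Strict convexity of $w$ on $(-\infty, a]$ together with $w(x_1) = w'(x_1) = 0$ makes $x_1$ the strict global minimum of $w$ on that half-line, with value $0$; hence $w \geq 0$ on $(-\infty, a]$, and symmetrically $w \geq 0$ on $[b, \infty)$. In particular $w(a), w(b) \geq 0$, and strict concavity of $w$ on $[a, b]$ places its graph above the chord joining $(a, w(a))$ and $(b, w(b))$, which lies at or above the horizontal axis. Combining the three estimates yields $w \geq 0$ throughout $\RR$, i.e.\ $v \leq u$.

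The main obstacle is the second paragraph: excluding every configuration of the tangency points other than $x_1 < a < b < x_2$ requires tracking the signs of $v'$ on each of the three monotonicity subintervals and handling the boundary cases $x_i = a$ or $x_i = b$ with care. Once the localisation is secured, the third paragraph is essentially a picture.
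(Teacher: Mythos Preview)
Your proof is correct and follows essentially the same route as the paper's: both produce a third point $c\in(x_1,x_2)$ with $v'(c)=v'(x_1)=v'(x_2)$ via Rolle/MVT, use the piecewise strict monotonicity of $v'$ to force $x_1\le a$ and $x_2\ge b$, and then read off $v\le u$ from concavity/convexity on the three intervals. Your treatment is somewhat more explicit (working with $w=u-v$, handling boundary cases, and spelling out the sign chase on $[a,b]$), but the argument is the same.
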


\begin{proof} Denote  $c=u'(x)$, $x\in\RR$. There exist $\alpha<\beta$, such that $v(\alpha)=u(\alpha)$, $v(\beta) =u(\beta)$, and $v'(\alpha) = v'(\beta) = c$. By Rolle's theorem applied to the function $v-u$, there exists $\gamma\in(\alpha,\beta)$ such that $v'(\gamma)=c$. Since the function $v'$ attains every value at most thrice, we conclude that $\alpha\in(-\infty,a]$, $\gamma\in(a,b)$, and $\beta\in[b,\infty)$. The assertion follows from the assumption about concavity (respectively convexity) of $v$ on those intervals.
\end{proof}

Finally, we have the following analog and generalization of Proposition~\ref{prop:majorant}.

\begin{proposition}\label{prop:majorant-pm}
If $1<p<\infty$, $p\neq 2$,  $m>-2(p-1)/p$,  and $\lambda>0$, then there exists a positive constant $\Dpml$, such that the inequality
\begin{equation*}
|x-\lambda y|^p - \Cpml^p|x|^p \leq -\Dpml|y|^{p-2}y(x-\gpm y)
\end{equation*}
holds for all $x,y\in\RR$.
\end{proposition}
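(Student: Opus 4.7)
The plan is to mimic the strategy of Proposition~\ref{prop:majorant}: reduce to one variable by homogeneity, show that the resulting one-variable function has a concave--convex--concave shape, and identify the right-hand side of the target inequality as the bitangent to that function, appealing to Lemma~\ref{lem:3con-pm}. The two tangent points should be extracted from the first-order conditions at the optimizer in the supremum defining $\Cpml$.

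Both sides of the inequality are $p$-homogeneous in $(x,y)$ and invariant under $(x,y)\mapsto(-x,-y)$, and for $y=0$ the claim is immediate from $\Cpml\geq 1$. Setting $u=x/y$, the problem reduces to showing that, for a suitable $\Dpml>0$,
\begin{equation*}
V_1(u):=|u-\lambda|^p-\Cpml^p|u|^p \;\leq\; -\Dpml(u-\gpm) =: U_1(u), \qquad u\in\RR.
\end{equation*}
Since $\lambda>0$ and $p\neq 2$, Lemma~\ref{lem:Cpm} gives $\Cpml>1$, hence the equation $V_1''=0$, i.e.\ $|u-\lambda|^{p-2}=\Cpml^p|u|^{p-2}$, has exactly two real solutions $a<b$. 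A short case analysis ($p>2$ vs.\ $1<p<2$) then shows that $V_1\in C^1(\RR)$ is strictly concave on $(-\infty,a]$, strictly convex on $[a,b]$, and strictly concave on $[b,\infty)$; the qualitative picture is the same in both cases.

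To pin down $\Dpml$, take a maximizer $(\alpha_\ast,\beta_\ast)\in\setpm$ of $\cmpl^p$ (whose existence follows from Lemma~\ref{lem:Cpm} as soon as $\Cpml>\max\{1,|1-\lambda\gpm^{-1}|\}$; the degenerate case is treated separately). The identity $\cmpl^p(\alpha_\ast,\beta_\ast)=\Cpml^p$ unfolds to
\begin{equation*}
(\beta_\ast-\gpm)V_1(\alpha_\ast)+(\gpm-\alpha_\ast)V_1(\beta_\ast)=0,
\end{equation*}
which says that the secant through $(\alpha_\ast,V_1(\alpha_\ast))$ and $(\beta_\ast,V_1(\beta_\ast))$ passes through $(\gpm,0)$. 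Writing out the first-order conditions $\partial_\alpha\cmpl^p=\partial_\beta\cmpl^p=0$ yields $V_1'(\alpha_\ast)=V_1(\alpha_\ast)/(\alpha_\ast-\gpm)$ and $V_1'(\beta_\ast)=V_1(\beta_\ast)/(\beta_\ast-\gpm)$, and combined with the previous display these force $V_1'(\alpha_\ast)=V_1'(\beta_\ast)$. Calling this common slope $-\Dpml$, the line $U_1$ is the bitangent to $V_1$ at $\alpha_\ast$ and $\beta_\ast$, so Lemma~\ref{lem:3con-pm} delivers $V_1\leq U_1$ on $\RR$. Evaluating at $u=0$ yields $\lambda^p=V_1(0)\leq U_1(0)=\Dpml\gpm$, which forces $\Dpml>0$.

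The main obstacle I anticipate is the case analysis underlying the concave--convex--concave structure of $V_1$, especially for $1<p<2$, where $V_1$ fails to be $C^2$ at $u=0$ and $u=\lambda$ and the sign of $V_1''$ on each sub-interval must be read off carefully from one-sided limits. A secondary difficulty is the edge case in which the supremum defining $\Cpml$ equals $|1-\lambda\gpm^{-1}|$ and is not attained: here the points $\alpha_\ast,\beta_\ast$ coalesce to $\gpm$ and the variational argument above degenerates, so one has to take $\Dpml=-V_1'(\gpm)$ directly (which is still positive under $\lambda>0$) and verify the majorization by other means. Once the maximizer is in hand, however, the derivation of the bitangent is a clean algebraic consequence of the first-order conditions.
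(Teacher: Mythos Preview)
Your approach is essentially the paper's: reduce by homogeneity, use the concave--convex--concave shape of $V_1$, and read off the bitangent from the first-order conditions at an interior maximizer of $\cmpl^p$, invoking Lemma~\ref{lem:3con-pm}. Your repackaging of the stationarity conditions directly in terms of $V_1$ (namely $V_1'(\alpha_\ast)=V_1(\alpha_\ast)/(\alpha_\ast-\gpm)$, and similarly at $\beta_\ast$) is in fact a bit cleaner than the paper's computation with $K$ and $L$, and your positivity argument $\lambda^p=V_1(0)\leq U_1(0)=\Dpml\gpm$ is a nice alternative to the paper's check that $V_x(\bpml,1)<0$.

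The one genuine gap is the degenerate case $\Cpml=|1-\lambda\gpm^{-1}|$. You correctly identify $\Dpml=-V_1'(\gpm)$, but ``verify the majorization by other means'' is not a proof, and Lemma~\ref{lem:3con-pm} no longer applies since there is only one tangency point. The paper closes this gap by a dichotomy that is, in effect, the contrapositive of what you need: it takes as Case~1 the hypothesis that the tangent to $V_1$ at $\gpm$ already majorizes $V_1$ (this is inequality~\eqref{ineq:ban-jan}), in which case one is done; otherwise (Case~2) there exists $x_0$ violating that tangent inequality, and from $x_0$ one manufactures a point $(\alpha,\beta)\in\setpm$ with $\cmpl^p(\alpha,\beta)>(\lambda\gpm^{-1}-1)^p$, forcing $\Cpml>\max\{1,|1-\lambda\gpm^{-1}|\}$ and hence interior attainment via Lemma~\ref{lem:Cpm}. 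In other words, the degenerate case cannot occur unless the tangent at $\gpm$ already works. You should add exactly this argument to complete the proof.
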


\begin{proof} We shall consider two cases. As will be clear from the proof (and the following results) they correspond to the situation when the supremum in the definition of $\Cpml$ is equal to $|\lambda\gpm^{-1}-1|$, and the situation when the supremum in definition  of $\Cpml$ is attained in the interior of the set $\setpm$ (cf. Lemma~\ref{lem:Cpm}).

For the first case, we assume that $\lambda > 2\gpm$ and the inequality
\begin{equation}\label{ineq:ban-jan}
|x-\lambda|^p - (\lambda\gpm^{-1}-1)^p|x|^p \leq -p(\lambda-\gpm)^{p-1}\lambda \gpm^{-1}(x-\gpm)
\end{equation}
holds for all $x\in\RR$. Define
\begin{align*}
V(x,y) &= |x-\lambda y|^p - (\lambda\gpm^{-1}-1)^p|x|^p,\\
U(x,y) &= -p(\lambda-\gpm)^{p-1}\lambda\gpm^{-1}|y|^{p-2}y(x-\gpm y).
\end{align*}
The inequality $V(x,y)\leq U(x,y)$ holds for $y=0$ (since $\lambda\gpm^{-1}-1 > 1$) and for $y=1$ (due to~\eqref{ineq:ban-jan}), and hence by homogeneity for all $x,y\in\RR$.  Hence the assertion is satisfied with $\Dpml = p(\lambda-\gpm)^{p-1}\lambda\gpm^{-1}>0$ (and with $\lambda\gpm^{-1}-1$, which is not greater than $\Cpml$, in the place of $\Cpml$). This finishes the proof in the first case.

Let us now consider the second case: we have either $\lambda\in(0,2\gpm]$, or we have $\lambda > 2\gpm$, but the inequality~\eqref{ineq:ban-jan} does not hold for all $x\in\RR$. We claim that the supremum in the definition of the constant $\Cpml$ is attained at some point of the set $\setpm$. Indeed, if $\lambda\in(0,2\gpm]$, then $\max\{|1-\lambda\gpm^{-1}|,1 \} =1$ and Lemma~\ref{lem:Cpm} implies the claim. If on the other hand $\lambda>2\gpm$ and the inequality~\eqref{ineq:ban-jan} does not hold for every $x\in\RR$, then there exists some $x_0\in\RR$, such that
\begin{equation}\label{ineq:ban-jan-not}
|x_0-\lambda|^p - (\lambda\gpm^{-1}-1)^p|x_0|^p > -p(\lambda-\gpm)^{p-1}\lambda\gpm^{-1}(x_0-\gpm).
\end{equation}
Of course we cannot have $x_0 = \gpm$. Suppose first, that $x_0 > \gpm$. Since
\begin{equation*}
\lim_{x\to\gpm} \frac{|x-\lambda|^p - (\lambda\gpm^{-1}-1)^p|x|^p}{x-\gpm} = -p(\lambda-\gpm)^{p-1}\lambda\gpm^{-1},
\end{equation*}
we conclude from~\eqref{ineq:ban-jan-not}, that for some $(\alpha,\beta)\in\setpm$ we have
\begin{equation*}
|\beta-\lambda|^p - (\lambda\gpm^{-1}-1)^p|\beta|^p > \frac{|\alpha-\lambda|^p - (\lambda\gpm^{-1}-1)^p|\alpha|^p}{\alpha-\gpm} (\beta-\gpm)
\end{equation*}
(it suffices to take $\beta=x_0$ and $\alpha$ smaller than, but close to $\gpm$) or equivalently
\begin{equation*}
\frac{(\beta-\gpm)|\alpha-\lambda|^p + (\gpm-\alpha)|\beta-\lambda|^p}{(\beta-\gpm)|\alpha|^p + (\gpm-\alpha)|\beta|^p} > (\lambda\gpm^{-1}-1)^p.
\end{equation*}
We arrive at the same conclusion, if $x_0 < \gpm$ (it suffices to take $\alpha = x_0$ and $\beta$ greater than, but close to $\gpm$). This finishes the proof of the claim: in the second case we always have
\begin{equation*}
\Cpml^p = \cmpl^p(\apml,\bpml),
\end{equation*}
for some point $(\apml,\bpml)\in\setpm$ (of course $\apml$, $\bpml$ may depend of $p$, $m$, and $\lambda$; uniqueness is not important to us).

After denoting
\begin{align*}
K(\alpha,\beta) &= (\beta-\gpm)|\alpha-\lambda|^p + (\gpm-\alpha)|\beta-\lambda|^p,\\
L(\alpha,\beta) &=(\beta-\gpm)|\alpha|^p + (\gpm-\alpha)|\beta|^p,
\end{align*}
we can rewrite the condition
$\frac{\partial}{\partial\alpha} \cmpl^p(\apml,\bpml) = 0$
as
\begin{align}
\begin{split}
\label{eq:partial-zero}
\MoveEqLeft[4]
\big(p(\bpml-\gpm)|\apml-\lambda|^{p-2}(\apml-\lambda) -|\bpml-\lambda|^p\big) \cdot L(\apml,\bpml)\\
& - K(\apml,\bpml)\cdot\big(p(\bpml-\gpm)|\apml|^{p-2}\apml -|\bpml|^p\big) = 0.
\end{split}
\end{align}
The condition $\frac{\partial}{\partial\beta} \cmpl^p(\apml,\bpml) = 0$ implies a similar equation.

Define now
\begin{align*}
V(x,y) &= |x-\lambda y|^p - \Cpml^p|x|^p,\\
U(x,y) &= \frac{V(\bpml,1) - V(\apml,1)}{\bpml-\apml}|y|^{p-2}y(x - \gpm y).
\end{align*}
Using the fact that $\Cpml^p = K(\apml,\bpml)/L(\apml,\bpml)$, we see that
\begin{align*}
V(\apml,1) &=
\frac{(\gpm-\apml)\big( |\apml-\lambda|^p|\bpml|^p   -|\apml|^p|\bpml-\lambda|^p\big) }{L(\apml,\bpml)},\\
V(\bpml,1) &=
\frac{(\bpml-\gpm)\big( |\apml|^p|\bpml-\lambda|^p   - |\apml-\lambda|^p|\bpml|^p\big) }{L(\apml,\bpml)},
\end{align*}
and consequently
\begin{equation}\label{eq:delta-V}
\frac{V(\bpml,1) - V(\apml,1)}{\bpml-\apml} = \frac{|\apml|^p|\bpml-\lambda|^p   - |\apml-\lambda|^p|\bpml|^p }{L(\apml,\bpml)}.
\end{equation}
Hence $V(\apml,1)=U(\apml,1)$ and $V(\bpml,1)=U(\bpml,1)$.

On the other hand (we use~\eqref{eq:partial-zero} in the second equality, and the definitions of $K$ and $L$ in the third),
\begin{multline*}
V_x(\apml,1)
=
\frac{p |\apml-\lambda|^{p-2}(\apml-\lambda) L(\apml,\bpml) - K(\apml,\bpml)p |\apml|^{p-2}\apml }{L(\apml,\bpml)}\\
=
\frac{L(\apml,\bpml) |\bpml-\lambda|^p- K(\apml,\bpml)|\bpml|^p }{L(\apml,\bpml)(\bpml-\gpm)}
=
\frac{|\apml|^p|\bpml-\lambda|^p   - |\apml-\lambda|^p|\bpml|^p }{L(\apml,\bpml)}.
\end{multline*}
By~\eqref{eq:delta-V}, we conclude that $V_x(\apml,1)=U_x(\apml,1)$. Similarly, $V_x(\bpml,1)=U_x(\bpml,1)$ follows from $\frac{\partial}{\partial\beta} \cmpl^p(\apml,\bpml) = 0$.

Therefore, $U(\cdot,1):x\mapsto U(x,1)$ is tangent to $V(\cdot,1):x\mapsto V(x,1)$ at $x=\apml$ and $x=\bpml$. Hence Lemma~\ref{lem:3con-pm} implies that $V(x,1)\leq U(x,1)$ for any $x\in\RR$, and by homogeneity also $V(x,y)\leq U(x,y)$ for any $x,y\in\RR$ (for $y=0$ the inequality holds, since $\Cpml>1$).

Finally, let us notice that
\begin{equation*}
V_x(\bpml,1) = p(|\bpml-1|^{p-2}(\bpml-1) - \Cpml^p\bpml^{p-1}) \leq p \bpml^{p-1}(1- \Cpml^p)<0
\end{equation*}
(we used the fact that $\bpml>0$ and $\Cpml>1$) and hence the assertion is satisfied with
\begin{equation*}
\Dpml := U_x(\bpml,1) = V_x(\bpml,1) <0.
  \qedhere
\end{equation*}
\end{proof}

\subsection{Proof of the main result}
\label{sec:lambda-pm-proofs}

We start with the following observation (cf.~\cite{MR816744}).

\begin{lemma}\label{lem:pelcz}
Let $T:\Lp\to\Lp$ be a linear operator which maps real-valued functions to real-valued functions. If the inequality $\|Tf\|_p \leq C \|f\|_p$  holds for any real-valued function $f\in\Lp$, then it also holds (with the same constant) for any complex-valued function $f\in\Lp$.
\end{lemma}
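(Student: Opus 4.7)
The plan is to use the standard Marcinkiewicz--Zygmund type complexification trick, which expresses the $p$-th power of the modulus of a complex number as a rotational average of $p$-th powers of absolute values of real linear combinations.

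Concretely, I would start from the identity
\begin{equation*}
|a+ib|^p = \frac{1}{\kappa_p}\int_0^{2\pi} |a\cos\theta + b\sin\theta|^p\,d\theta, \qquad a,b\in\RR,
\end{equation*}
where $\kappa_p = \int_0^{2\pi}|\cos\theta|^p\,d\theta$. This is proved in one line by writing $a+ib = re^{i\varphi}$ so that $a\cos\theta + b\sin\theta = r\cos(\theta-\varphi)$ and using the rotation-invariance of the integral.

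Given $f = u+iv \in \Lp$ with $u,v$ real-valued and in $\Lp$, linearity of $T$ together with the hypothesis that $T$ preserves real-valuedness gives $Tf = Tu + iTv$ and $T(u\cos\theta + v\sin\theta) = (Tu)\cos\theta + (Tv)\sin\theta$ for every $\theta\in[0,2\pi)$. Applying the identity pointwise, integrating over $[0,\infty)$ with respect to the underlying variable, using Fubini to interchange the $\theta$ and $t$ integrals, then invoking the real-valued bound for each fixed $\theta$, I obtain
\begin{align*}
\|Tf\|_p^p
&= \frac{1}{\kappa_p}\int_0^{2\pi}\!\int_0^{\infty} |(Tu)(t)\cos\theta + (Tv)(t)\sin\theta|^p\,dt\,d\theta \\
&= \frac{1}{\kappa_p}\int_0^{2\pi} \|T(u\cos\theta + v\sin\theta)\|_p^p\,d\theta \\
&\leq \frac{C^p}{\kappa_p}\int_0^{2\pi} \|u\cos\theta + v\sin\theta\|_p^p\,d\theta = C^p\|f\|_p^p,
\end{align*}
where the last equality reverses the same Fubini-plus-identity computation.

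There is essentially no obstacle here; the only point that requires a moment of care is the applicability of Fubini's theorem, which is immediate since the integrand in the double integral is nonnegative and the inner $\theta$-integral is bounded pointwise by $2\pi \cdot (|Tu(t)|+|Tv(t)|)^p$, an $L^1$ function of $t$ once we know $Tu, Tv\in\Lp$ (which follows from the real-valued hypothesis applied to $u$ and $v$ separately).
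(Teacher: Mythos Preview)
Your proof is correct and follows essentially the same averaging idea as the paper: both arguments rest on an identity of the form $|a+ib|^p = c_p^{-1}\cdot\text{(average of } |a\xi + b\eta|^p)$ over a rotationally invariant family, then apply the real-valued bound inside the average. The only cosmetic difference is the choice of averaging device---you use the uniform measure on the circle via $(\cos\theta,\sin\theta)$, while the paper uses a pair of independent standard Gaussians $(G_1,G_2)$ and the fact that $aG_1+bG_2$ has the law of $\sqrt{a^2+b^2}\,G_1$; writing the Gaussian in polar coordinates shows the two are equivalent, and your version is arguably the more elementary packaging.
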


\begin{proof}
 Suppose that $f = u+iv\in\Lp$, where $u$, $v$ are real-valued. Let $G_1$, $G_2$ be independent Gaussian random variables with mean zero and variance one. Using the fact that for $a_1,a_2\in\RR$ the random variable $a_1G_1+a_2G_2$ has the same distribution as $\sqrt{a_1^2+a_2^2}G_1$, we arrive at
\begin{multline*}
\| T f\|_p^p \EE |G_1|^p = \int_0^{\infty} |T(u)^2+T(v)^2|^{p/2}\EE |G_1|^p dt
= \int_0^{\infty} \EE |T(u) G_1+T(v) G_2|^{p} dt \\
=\EE\int_0^{\infty} |T( uG_1 + vG_2)|^{p}dt\leq C^p\EE\int_0^{\infty} |uG_1+ vG_2|^{p}dt = C^p\| f\|_p^p\EE |G_1|^p
\end{multline*}
(we have suppressed the dependence of the functions on the argument $t\in[0,\infty)$ in the notation).
This finishes the proof of the lemma.
\end{proof}

Henceforth we assume without loss of generality that all functions are real-valued. The proof of~Theorem~\ref{thm:main} is divided into three parts.

\begin{proof}[Proof of inequality~\eqref{ineq:main} for $\lambda>0$ and $p\neq 2$.] Fix $1<p<\infty$, $p\neq 2$, $m>2(p-1)/p$, $\lambda\in\RR$, and denote
\begin{align*}
V(x,y) &= |x-\lambda y|^p - \Cpml^p|x|^p,\\
U(x,y) &= -\Dpml|y|^{p-2}y(x-\gpm y),
\end{align*}
where $\Dpml$ is the positive number from Proposition~\ref{prop:majorant-pm}.

Let $f:[0,1]\to\RR$ be a continuous function. By Proposition~\ref{prop:majorant-pm}, for every $t\in[0,1]$ we have $V(f(t),\opHm f(t))\leq U(f(t),\opHm f(t))$. After integrating over the interval $[0,1]$ and applying Lemma~\ref{lem:by-parts-pm}, we arrive at
\begin{multline*}
\int_0^1 |f(t)-\lambda \opHm f(t)|^p dt - \Cpml^p \int_0^1 |f(t)|^p dt\\
\leq -\Dpml \int_0^1 |\opHm f(t)|^{p-2}\opHm f(t) \big( f(t) - \gpm \opHm f(t)\big) dt \leq 0.
\end{multline*}
A standard approximation argument gives $\|f - \lambda\opHm f\|_{\Lpseg} \leq \Cpml \| f\|_{\Lpseg}$ for $f\in\Lpseg$. If $f\in\Lp$, then
\begin{multline*}
\int_0^n |f(t)-\lambda\opHm f(t)|^p dt = n \int_0^1 |f(nt)-\lambda\opHm (f(n\cdot)) (t)|^p dt \\
\leq n\Cpml^p \int_0^1 |f(nt)|^p dt = \Cpml^p \int_0^n |f(t)|^p dt,
\end{multline*}
and it suffices to take $n\to\infty$ to arrive at $\|f - \lambda \opHm f\|_{\Lp} \leq \Cpml \| f\|_{\Lp}$. This ends the proof of inequality~\eqref{ineq:main}.
\end{proof}

\begin{proof}[Proof of inequality~\eqref{ineq:main} for $\lambda>0$ and $p=2$] By an argument similar to that for $p\neq 2$, we show that $\opnorm{\opId- \lambda \opHm}{\Lp[2]}\leq \lambda\gpm[p=2,m]^{-1}-1$ for $\lambda> 2\gpm[p=2,m] = 1+m$. Indeed, we only need to use the functions
\begin{align*}
V(x,y) &= (x-\lambda y)^2 - (\lambda\gpm[p=2,m]^{-1}-1)^2x^2,\\
U(x,y) &= -2(\lambda-\gpm[p=2,m])\lambda\gpm[p=2,m]^{-1}y(x- \gpm[p=2,m] y),
\end{align*}
for which checking the majorization is straightforward ($x\mapsto U(x,1)$ is the tangent to the concave function $x\mapsto V(x,1)$ at $x=\gpm[p=2,m]$). We conclude that $\opnorm{\opId- \lambda \opHm}{\Lp[2]} = \lambda\gpm[p=2,m]^{-1}-1 = \Cpml[p=2,m,\lambda]$ for $\lambda>1+m$.

Moreover, the $L^2$-norm of the operator $\opId- \lambda \opHm$ is clearly a convex function of the variable $\lambda$:
\begin{align*}
\MoveEqLeft[2]
\opnorm{\opId- (s\lambda_1 +(1-s)\lambda_2)\opHm}{\Lp[2]}\\
&=
\opnorm{s (\opId- \lambda_1 \opHm) + (1-s)(\opId- \lambda_2 \opHm) }{\Lp[2]}\\
&\leq 
s\opnorm{ \opId- \lambda_1 \opHm}{\Lp[2]}
+(1-s)\opnorm{\opId- \lambda_2 \opHm }{\Lp[2]}.
\end{align*}
Since this norm is equal to $1$ for $\lambda=0$, tends to $1$ as $\lambda\to (1+m)^+$, and is always at least $1$ (by Lemmas~\ref{lem:sharp} and~\ref{lem:Cpm}, or Remark~\ref{rem:inv}), we conclude that $\opnorm{\opId- \lambda \opHm}{\Lp[2]}=1$ for $\lambda\in[0,1+m]$.
\end{proof}

\begin{remark}
The operator $\opId - (1+m)\opHm$ is an isometry on $\Lp[2]$ (since the Beurling-Ahlfors transform is an $L^2$-isometry; see~\cite[Theorem~1.1]{MR2595549}).
\end{remark}

\begin{proof}[Proof of inequality~\eqref{ineq:main} for $\lambda\leq 0$.]
The triangle inequality and Lemma~\ref{lem:Cpm} yield
\begin{equation*}
\opnorm{\opId - \lambda\opHm}{\Lp} \leq  |\lambda|\gpm^{-1} +1 \leq \Cpml.
\end{equation*}
Moreover, the opposite inequality is also true by Lemma~\ref{lem:sharp}, so we in fact have equalities above.
\end{proof}

Sharpness of inequality~\eqref{ineq:main} follows from Lemma~\ref{lem:sharp}. In order to complete the proof of Theorem~\ref{thm:main}, we have to explain why $\Cpml[p,m=0,\lambda=1]^p$ is equal to
\begin{equation*}
\Cp^p =\sup \{ \cmpl[p,m=0,\lambda=1](\alpha,1) : \alpha < (p-1)/p \}  =
  \begin{cases}
  \frac{1}{(p-1)^p} & \text{if } 1<p\leq 2,\\
  \frac{(1+|\alpha_p|)^{p-2}}{p-1} & \text{if } p> 2
  \end{cases}
\end{equation*}
(see Section~\ref{sec:mart} for the definition of $\alpha_p$ and details).

Rather than to check directly that the supremum in the definition of $\Cpml[p,0,1]$ is attained in the point $(\alpha_p, 1)$ we refer to results obtained in Section~\ref{sec:mart}. For $m=0$ and $\lambda=1$, the above proof of inequality~\eqref{ineq:main}, can be repeated with the functions  $V$, $U$ being defined like in Subsection~\ref{sec:majorant} (of course, instead of using Proposition~\ref{prop:majorant-pm}, we use Proposition~\ref{prop:majorant}). This gives us $\|f - \opHm[0] f\|_{\Lp} \leq \Cp \| f\|_{\Lp}$, but since clearly $\Cp \leq \Cpml[p,0,1]$, and the constant $\Cpml[p,0,1]$ is best possible in this inequality, we conclude that $\Cp = \Cpml[p,0,1]$.

\section*{Acknowledgments}

I thank Adam Os\c{e}kowski for suggesting the study of the Hardy operator by the means of maximal martingale inequalities, as well as for his invaluable help, and many fruitful and inspiring discussions.

I also thank Tomasz Tkocz, whose nicely written bachelor thesis helped me to understand the connection between  Doob's inequality and the Hardy operator, when I was first learning  about them.

\bibliographystyle{amsplain}
\bibliography{radial-BA}

\def\cprime{$'$} \def\polhk#1{\setbox0=\hbox{#1}{\ooalign{\hidewidth
  \lower1.5ex\hbox{`}\hidewidth\crcr\unhbox0}}} \def\cprime{$'$}
  \def\cprime{$'$} \def\polhk#1{\setbox0=\hbox{#1}{\ooalign{\hidewidth
  \lower1.5ex\hbox{`}\hidewidth\crcr\unhbox0}}}
\providecommand{\bysame}{\leavevmode\hbox to3em{\hrulefill}\thinspace}
\providecommand{\MR}{\relax\ifhmode\unskip\space\fi MR }
\providecommand{\MRhref}[2]{%
  \href{http://www.ams.org/mathscinet-getitem?mr=#1}{#2}
}
\providecommand{\href}[2]{#2}
\begin{thebibliography}{10}

\bibitem{MR1294669}
Kari Astala, \emph{Area distortion of quasiconformal mappings}, Acta Math.
  \textbf{173} (1994), no.~1, 37--60. \MR{1294669 (95m:30028b)}

\bibitem{MR2001941}
R.~Ba{\~n}uelos and P.~J. M{\'e}ndez-Hern{\'a}ndez, \emph{Space-time {B}rownian
  motion and the {B}eurling-{A}hlfors transform}, Indiana Univ. Math. J.
  \textbf{52} (2003), no.~4, 981--990. \MR{2001941 (2004h:60067)}

\bibitem{MR2386238}
Rodrigo Ba{\~n}uelos and Prabhu Janakiraman, \emph{{$L^p$}-bounds for the
  {B}eurling-{A}hlfors transform}, Trans. Amer. Math. Soc. \textbf{360} (2008),
  no.~7, 3603--3612. \MR{2386238 (2009d:42032)}

\bibitem{MR2595549}
\bysame, \emph{On the weak-type constant of the {B}eurling-{A}hlfors
  transform}, Michigan Math. J. \textbf{58} (2009), no.~2, 459--477.
  \MR{2595549 (2011a:47096)}

\bibitem{MR3018958}
Rodrigo Ba{\~n}uelos and Adam Os{\c{e}}kowski, \emph{Sharp inequalities for the
  {B}eurling-{A}hlfors transform on radial functions}, Duke Math. J.
  \textbf{162} (2013), no.~2, 417--434. \MR{3018958}

\bibitem{MR3189475}
\bysame, \emph{On the operator {$\Lambda^*$} and the {B}eurling-{A}hlfors
  transform on radial functions}, Michigan Math. J. \textbf{63} (2014), no.~1,
  213--221. \MR{3189475}

\bibitem{MR1370109}
Rodrigo Ba{\~n}uelos and Gang Wang, \emph{Sharp inequalities for martingales
  with applications to the {B}eurling-{A}hlfors and {R}iesz transforms}, Duke
  Math. J. \textbf{80} (1995), no.~3, 575--600. \MR{1370109 (96k:60108)}

\bibitem{MR3047469}
Alexander Borichev, Prabhu Janakiraman, and Alexander Volberg,
  \emph{Subordination by conformal martingales in {$L^p$} and zeros of
  {L}aguerre polynomials}, Duke Math. J. \textbf{162} (2013), no.~5, 889--924.
  \MR{3047469}

\bibitem{MR1108183}
Donald~L. Burkholder, \emph{Explorations in martingale theory and its
  applications}, \'{E}cole d'\'{E}t\'e de {P}robabilit\'es de {S}aint-{F}lour
  {XIX}---1989, Lecture Notes in Math., vol. 1464, Springer, Berlin, 1991,
  pp.~1--66. \MR{1108183 (92m:60037)}

\bibitem{MR577984}
S.~D. Chatterji, \emph{Some comments on the maximal inequality in martingale
  theory}, Measure theory, {O}berwolfach 1979 ({P}roc. {C}onf., {O}berwolfach,
  1979), Lecture Notes in Math., vol. 794, Springer, Berlin, 1980,
  pp.~361--364. \MR{577984 (82a:60068)}

\bibitem{MR2164413}
Oliver Dragi{\v{c}}evi{\'c} and Alexander Volberg, \emph{Bellman function,
  {L}ittlewood-{P}aley estimates and asymptotics for the {A}hlfors-{B}eurling
  operator in {$L^p(\Bbb C)$}}, Indiana Univ. Math. J. \textbf{54} (2005),
  no.~4, 971--995. \MR{2164413 (2006i:30025)}

\bibitem{MR2677626}
James~T. Gill, \emph{On the {B}eurling-{A}hlfors transform's weak-type
  constant}, Michigan Math. J. \textbf{59} (2010), no.~2, 353--363. \MR{2677626
  (2012b:42022)}

\bibitem{MR719167}
T.~Iwaniec, \emph{Extremal inequalities in {S}obolev spaces and quasiconformal
  mappings}, Z. Anal. Anwendungen \textbf{1} (1982), no.~6, 1--16. \MR{719167
  (85g:30027)}

\bibitem{MR0181748}
Olli Lehto, \emph{Remarks on the integrability of the derivatives of
  quasiconformal mappings}, Ann. Acad. Sci. Fenn. Ser. A I No. \textbf{371}
  (1965), 8. \MR{0181748 (31 \#5975)}

\bibitem{MR2964297}
Adam Os{\polhk{e}}kowski, \emph{Sharp martingale and semimartingale
  inequalities}, Instytut Matematyczny Polskiej Akademii Nauk. Monografie
  Matematyczne (New Series) [Mathematics Institute of the Polish Academy of
  Sciences. Mathematical Monographs (New Series)], vol.~72,
  Birkh\"auser/Springer Basel AG, Basel, 2012. \MR{2964297}

\bibitem{MR3119338}
Adam Os{\c{e}}kowski, \emph{Sharp logarithmic bounds for {B}eurling-{A}hlfors
  operator restricted to the class of radial functions}, Mediterr. J. Math.
  \textbf{10} (2013), no.~4, 1883--1894. \MR{3119338}

\bibitem{MR816744}
A.~Pe{\l}czy{\'n}ski, \emph{Norms of classical operators in function spaces},
  Ast\'erisque (1985), no.~131, 137--162, Colloquium in honor of Laurent
  Schwartz, Vol. 1 (Palaiseau, 1983). \MR{816744 (87b:47036)}

\bibitem{MR2068982}
A.~Vol{\cprime}berg and F.~Nazarov, \emph{Heat extension of the {B}eurling
  operator and estimates for its norm}, Algebra i Analiz \textbf{15} (2003),
  no.~4, 142--158. \MR{2068982 (2005f:30042)}

\bibitem{volberg}
Alexander Volberg, \emph{{A}hlfors-{B}eurling operator on radial functions},
  Preprint (2012), {\tt arXiv:1203.2291}.

\end{thebibliography}

\end{document}